\pgfplotsset{compat=1.18}
\crefname{hypothesis}{Hypothesis}{Hypotheses}
\Crefname{ALC@unique}{Line}{Lines}
\colorlet{texcscolor}{blue!50!black}
\colorlet{texemcolor}{red!70!black}
\colorlet{texpreamble}{red!70!black}
\colorlet{codebackground}{black!25!white!25}
\lstdefinestyle{siamlatex}{%
  style=tcblatex,
  texcsstyle=*\color{texcscolor},
  texcsstyle=[2]\color{texemcolor},
  keywordstyle=[2]\color{texemcolor},
  moretexcs={cref,Cref,maketitle,mathcal,text,headers,email,url},
}
\DeclareTotalTCBox{\code}{ v O{} }
{ 
  fontupper=\ttfamily\color{black},
  nobeforeafter,
  tcbox raise base,
  colback=codebackground,colframe=white,
  top=0pt,bottom=0pt,left=0mm,right=0mm,
  leftrule=0pt,rightrule=0pt,toprule=0mm,bottomrule=0mm,
  boxsep=0.5mm,
  #2}{#1}
\patchcmd\newpage{\vfil}{}{}{}
\title{THREE-PRECISION ITERATIVE REFINEMENT WITH PARAMETER REGULARIZATION AND PREDICTION FOR SOLVING LARGE SPARSE LINEAR SYSTEMS\thanks{Submitted to the editors DATE.
\funding{National Key Research and Development Program of China (Grant No. 2023YFB3001604).}}}
\author{ Jifeng Ge\footnotemark[2] \and Juan Zhang\thanks{Key Laboratory of Intelligent Computing and Information Processing of Ministry of Education,
Hunan KeyLaboratory for Computation 
and Simulation in Science and Engineering, School of Mathematics and Computational Science, Xiangtan University, 
Xiangtan, Hunan, China, 411105.
(Corresponding authors. \email{zhangjuan@xtu.edu.cn}(JZ)).} }
\begin{document}
\maketitle

\begin{tcbverbatimwrite}{tmp_\jobname_abstract.tex}
\begin{abstract}
    This study presents a novel mixed-precision iterative refinement algorithm, 
    GADI-IR, within the general alternating-direction implicit (GADI) framework, 
    designed for efficiently solving large-scale sparse linear systems. 
    By employing low-precision arithmetic, particularly half-precision (FP16), 
    for computationally intensive inner iterations, the method achieves 
    substantial acceleration while maintaining high numerical accuracy. 
    Key challenges such as overflow in half-precision and convergence issues 
    for low precision are addressed through careful backward error analysis 
    and the application of a regularization parameter $\alpha$. Furthermore, 
    the integration of low-precision arithmetic into the parameter 
    prediction process, using Gaussian Process Regression (GPR), 
    significantly reduces computational time without degrading performance. 
    The method is particularly effective for large-scale linear systems 
    arising from discretized partial differential equations and 
    other high-dimensional problems, where both accuracy and efficiency 
    are critical. Numerical experiments demonstrate that the use of 
    mixed-precision strategies not only accelerates computation 
    but also ensures robust convergence, making the approach advantageous 
    for various applications. The results highlight the potential 
    of leveraging lower-precision arithmetic to achieve superior 
    computational efficiency in high-performance computing.
\end{abstract}

\begin{keywords}
  Mixed precision,
  iterative refinement, 
  general alternating-direction implicit framework,
  large sparse linear systems,
  convergent analysis.
\end{keywords}

\begin{MSCcodes}
65G50, 65F10
\end{MSCcodes}
\end{tcbverbatimwrite}
\input{tmp_\jobname_abstract.tex}

\section{Introduction}
\label{sec:intro}

\subsection{Background}
Mixed precision techniques have been a focus of research 
for many years. With advancements in hardware, such as 
the introduction of tensor cores in modern GPUs, 
half-precision arithmetic (FP16) has become 
significantly faster than single or double precision
\cite{nvidia_hopper_2024}\cite{nvidia_tensor_core_gpu_2024}, 
driving its growing importance in high-performance computing 
(HPC) and deep learning. By strategically utilizing FP16 
alongside the capabilities of modern tensor cores, 
mixed precision computing marks 
a major breakthrough in HPC. It offers 
notable improvements in computational speed, 
memory utilization, and energy efficiency, 
all while preserving the accuracy required 
for a wide array of scientific and 
engineering applications.

Solving large sparse linear systems 
is a cornerstone of numerical computing, 
with broad applications in scientific simulations, 
engineering problems, and data science. These systems frequently 
arise from the discretization of partial differential 
equations or the modeling of complex processes, where 
efficient and scalable solution methods are essential. 
In this context, the paper \cite{doi:10.1137/21M1450197} introduces an innovative and 
flexible framework called the general alternating-direction 
implicit (GADI) method. This framework tackles the challenges 
associated with large-scale sparse linear systems by unifying 
existing methods under a general structure and integrating 
advanced strategies, such as Gaussian process regression (GPR)\cite{NIPS1995_7cce53cf}, 
to predict optimal parameters. These innovations significantly 
enhance the computational efficiency, scalability, and robustness 
of solving such systems.

The motivation for incorporating mixed precision into the GADI framework stems from the growing need to balance computational efficiency and resource utilization when solving large-scale sparse linear systems. Mixed precision methods, which combine high precision (e.g., double precision) and low precision (e.g., single or half precision) arithmetic, have gained traction due to advancements in modern hardware, such as GPUs and specialized accelerators, which are optimized for lower-precision computations.

The use of low precision (like 16-bit floating-point) in the GADI framework offers a powerful combination of speed, memory efficiency, and energy savings, making it particularly well-suited for solving large sparse linear systems. FP16 computations are significantly faster than FP32 or FP64 on modern hardware like GPUs with Tensor Cores, enabling rapid execution of matrix operations while dramatically reducing memory usage, which allows larger problems to fit within the same hardware constraints. This reduced precision also lowers energy consumption, making it a more sustainable option for large-scale computations. While FP16 has limitations in range and precision, its integration into a mixed precision approach within the GADI framework ensures critical calculations retain higher precision to guarantee robustness and convergence. Additionally, FP16 accelerates the Gaussian Process Regression (GPR) used for parameter prediction, enabling efficient optimization of the framework's performance. Together, these advantages position FP16 as a key enabler for scalable and efficient numerical computing in modern applications.

Iterative refinement is a numerical technique used 
to improve the accuracy of a computed solution 
to a linear system of equations, particularly when 
the initial solution is obtained using approximate methods. 
It is widely used in scenarios where high precision is 
required, such as solving large-scale or ill-conditioned linear systems.

The process starts with an approximate solution, 
often computed in low precision for efficiency, 
followed by a series of iterative corrections. 
Each iteration involves:
\begin{itemize}
    \item Residual Computation: Calculate the residual 
    $r=Ax-b$, in precision $u_f$, where $A$ is the coefficient matrix,
    x is the current solution.
    \item Solving a linear system to obtain a correction 
    vector that reduces the residual in precision $u_r$.
    \item Updating the solution with the correction vector 
    in precision $u$.  
\end{itemize}

This process repeats until the residual or the correction 
falls below a specified tolerance, indicating convergence 
to the desired accuracy. The error analyses
were given for fixed point arithmetic by \cite{10.1145/321386.321394} 
and \cite{Wilkinson2023} for floating 
point arithmetic.

Half precision(16-bit) floating point arithmetic, defined
as a storage format in
the 2008 revision of the IEEE standard \cite{8766229}, 
is now starting to become available in
hardware, for example, in the NVIDIA A100 and H100 GPUs
\cite{nvidia_hopper_2024}\cite{nvidia_tensor_core_gpu_2024}, 
on which it runs up to 50x as fast as double precision arithmetic
with a proportional saving in energy consumption. Moreover, 
IEEE introduced the FP8 (Floating Point 8-bit) format as part 
of the IEEE 754-2018 standard revision which is designed with a 
reduced bit-width compared to standard IEEE floating-point 
formats like FP32 (32-bit floating point) and FP64 
(64-bit floating point). So in the 2010s iterative refinement 
attracted renewed interest. The following table summarizes key parameters 
for IEEE arithmetic precisions.

\begin{table}[htbp]
    \centering
    \begin{tabular}{|l|l|l|l|}
    \hline
    Type & Size & Range & Unit roundoff $u$ \\
    \hline
    half & 16bits & $10^{\pm5}$ & $2^{-11} \approx 4.9 \times 10^{-4}$ \\
    single & 32bits & $10^{\pm38}$ & $2^{-24} \approx 6.0 \times 10^{-8}$ \\
    double & 64bits & $10^{\pm308}$ & $2^{-53} \approx 1.1 \times 10^{-16}$ \\
    quadruple & 128bits & $10^{\pm4932}$ & $2^{-113} \approx 9.6 \times 10^{-35}$ \\
    \hline
    \end{tabular}
    \caption{\textit{Parameters for four IEEE arithmetic precisions. 
    "Range" denotes the order of magnitude of
    the largest and smallest positive normalized floating point numbers.}}
\end{table}

Many famous algorithms for 
linear systems like gmres, LSE and etc. have been studied in 
mixed precision\cite{doi:10.1137/17M1140819}\cite{doi:10.1137/20M1316822}\cite{doi:10.1137/23M1549079}.
In this paper, we focus on the iterative refinement of the GADI framework\cite{doi:10.1137/21M1450197}.

\subsection{Challenges}

Applying mixed precision to the GADI framework presents several challenges. The reduced numerical range and precision of formats like FP16 can lead to instability or loss of accuracy in iterative computations, particularly when solving ill-conditioned systems or handling sensitive numerical operations. 

FP16 has a limited numerical range, with maximum and minimum representable values significantly smaller than those in FP32 or FP64. When solving large sparse linear systems, certain operations—such as scaling large matrices or intermediate results in iterative steps—may exceed this range, causing overflow. This can lead to incorrect results or instability in the algorithm, particularly when the system matrices have large eigenvalues or poorly conditioned properties. Careful rescaling techniques or mixed-precision strategies are often required to mitigate this issue, ensuring critical computations remain within the numerical limits of FP16.

The reduced precision of FP16 (approximately 3 decimal digits) can introduce rounding errors during iterative processes in GADI. These errors accumulate and may prevent the solver from reaching a sufficiently accurate solution, especially for problems that require high precision or have small residual tolerances. The convergence criteria may need to be relaxed when using FP16, or critical steps (e.g., residual corrections or parameter updates) must be performed in higher precision (FP32 or FP64) to ensure the algorithm converges to an acceptable solution.

In the GADI framework, the Gaussian Process Regression (GPR) method is used to predict the optimal parameter 
$\alpha$ to enhance computational efficiency. However, when applying mixed precision, particularly with FP16, the predicted 
$\alpha$ may fail to ensure convergence. This is because 
$\alpha$ is derived assuming higher precision arithmetic, and the reduced precision and numerical range of FP16 can amplify rounding errors, introduce instability, or exacerbate sensitivity to parameter choices. Consequently, the predicted 
$\alpha$ may no longer balance the splitting matrices effectively, leading to slower convergence or divergence in the mixed precision GADI framework. Addressing this issue may require recalibrating 
$\alpha$ specifically for mixed precision or employing adaptive strategies that dynamically adjust parameters during computation to account for FP16 limitations.

\subsection{Contribution}
To resolve the challenges of mixed-precision GADI, 
we proposed rigorous theoretical analysis and extensive 
empirical validation to ensure that the mixed-precision 
GADI with iterative refinement achieves its goals of 
accelerated computation and high numerical accuracy.
In this article, we introduce a mixed 
precision iterative refinement method of GADI 
as GADI-IR to solve large sparse linear
systems of the form:
\begin{equation}
    Ax=b.
\end{equation}

The goal of this work is to develop a mixed-precision 
iterative refinement method that based on the GADI framework 
using three precisions. Our contributions are summarized as 
follows:
\begin{itemize}
\item We propose a novel mixed-precision iterative refinement 
method, GADI-IR, within the GADI framework, designed for efficiently solving 
large-scale sparse linear systems. We provide a rigorous theoretical 
analysis to ensure the numerical accuracy of the method.

\item We address the challenges of overflow and underflow in 
FP16 arithmetic by applying a regularization parameter $\alpha$ 
to balance the splitting matrices effectively and ensure robust 
convergence in the inner low precision steps.

\item We integrate low-precision arithmetic into the parameter 
prediction process using Gaussian Process Regression (GPR) method and compare 
the performance of GADI-IR with and without the regularization parameter. 
This demonstrates the effectiveness of $\alpha$ in enhancing computational 
efficiency and robustness.

\item By applying GADI-IR to
\begin{itemize}
    \item Three-dimensional convection-diffusion equation,
    \item Continuous-time algebraic Riccati equation (CARE),
    \item Continuous Sylvester equation,
\end{itemize}
we demonstrate the effectiveness 
of the proposed method in solving 
large-scale sparse linear systems. 
The numerical experiments confirm 
that the mixed-precision approach 
significantly accelerates computation 
while maintaining high numerical accuracy. 
The results also highlight the importance 
of the regularization parameter $\alpha$ in 
ensuring robust convergence, particularly 
when using low-precision arithmetic.
\end{itemize}

\subsection{Preliminaries}

We now summarize our notation and our assumptions in this 
article. 

For a non singular matrix $A$ and a vector $x$, we need the normwise condition 
number:
\begin{align}
    \kappa(A)=\|A\|\cdot\|A^{-1}\|.
\end{align}
If the norm is the 2-norm, we denote the condition number as $\kappa_2(A)$:
\begin{align}
    \kappa_2(A)=\|A\|_2\cdot\|A^{-1}\|_2=\sigma_{\max}(A)/\sigma_{\min}(A).
\end{align}

$fl_r(\cdot)$ denotes the evaluation of 
the argument of $fl_r$ in precision $u_r$.

The exact solution of $Ax=b$ is denoted by $x$ and the computed 
solution is denoted by $\hat{x}$.

In algorithm GADI-IR, we use the following notation:
\begin{align*}
    &H=\alpha I + M,\\
    &S=\alpha I + N,\\
    &(2-\omega)\alpha=p.
\end{align*}


\section{Error analysis}
\label{sec:ea}
\subsection{GADI-IR framework}
In \cite{doi:10.1137/21M1450197},  Jiang et al. proposed the GADI framework 
and corresponding algorithm.

Let $M,N\in\mathbb{C}^{n\times n}$ be splitting matrices of $A$ such that: 
$A=M+N$. Given an initial guess $x^{(0)}$, and $\alpha > 0, \omega >0$, 
the GADI framework is:
\begin{align}
\left\{\begin{aligned} {{{}}} & {{} {{} {{( \alpha I+M ) x^{( k+\frac{1} {2} )}=( \alpha I-N ) x^{( k )}+b,}}}} \\ {{{}}} & {{} {{{( \alpha I+N ) x^{( k+1 )}=( N-( 1-\omega) \alpha I ) x^{( k )}+( 2-\omega) \alpha x^{( k+\frac{1} {2} )}.}}}} \\ \end{aligned} \right. 
\label{gadi}
\end{align}
The following theorem, restated from \cite{doi:10.1137/21M1450197}, 
describes the convergence of the GADI framework:
\begin{theorem}[Convergence of the GADI Framework, Jiang et al., 2022]
    The GADI framework(\ref{gadi})
    converges to the unique solution $x$ of the linear system $Ax = b$ for any $\alpha > 0$ and $\omega \in [0, 2)$. Furthermore, the spectral radius $\rho(T(\alpha, \omega))$ satisfies:
    \[
    \rho(T(\alpha, \omega)) < 1,
    \]
    where the iterative matrix $T(\alpha, \omega)$ is defined as:
    \[
    T(\alpha, \omega) = (\alpha I + N)^{-1} (\alpha I + M)^{-1} (\alpha^2 I + MN - (1-\omega)\alpha A).
    \]
    \label{thm1}
\end{theorem}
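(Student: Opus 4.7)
The plan is to convert the two-step update (\ref{gadi}) into a single-step error recursion of the form $e^{(k+1)} = T(\alpha,\omega)\,e^{(k)}$, identify $T$ with the stated closed form, and then bound its spectral radius. First I would verify that the true solution $x$ of $Ax=b$ is a fixed point of both lines of (\ref{gadi}); this uses only $A=M+N$. Subtracting the fixed-point identity from the iteration yields the homogeneous coupled relations
\[
(\alpha I+M)\,e^{(k+\frac{1}{2})} = (\alpha I-N)\,e^{(k)},\qquad
(\alpha I+N)\,e^{(k+1)} = \bigl(N-(1-\omega)\alpha I\bigr)\,e^{(k)} + (2-\omega)\alpha\,e^{(k+\frac{1}{2})}.
\]

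Next I would eliminate the half step. Inserting $e^{(k+\frac{1}{2})} = H^{-1}(\alpha I - N)e^{(k)}$ into the second relation and factoring $H^{-1}$ through $N-(1-\omega)\alpha I$ via $H^{-1}H=I$, the bracketed matrix becomes $(\alpha I + M)\bigl(N-(1-\omega)\alpha I\bigr) + (2-\omega)\alpha(\alpha I - N)$. A direct expansion using $A = M+N$ collapses this to $\alpha^2 I + MN - (1-\omega)\alpha A$, yielding the advertised expression $T(\alpha,\omega) = S^{-1}H^{-1}\bigl[\alpha^2 I + MN - (1-\omega)\alpha A\bigr]$. As a by-product I would also record the Richardson-type rewriting $T = I - (2-\omega)\alpha\,(HS)^{-1}A$, obtained from the identity $(\alpha I+M)(\alpha I+N) = \alpha^2 I + \alpha A + MN$; this will be convenient for the spectral estimate.

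For the bound $\rho(T(\alpha,\omega))<1$, I would work with the similar matrix $\widetilde T = I - (2-\omega)\alpha\,H^{-1}A\,S^{-1}$ (which has the same spectrum as $T$) and invoke the positivity hypotheses implicit in the GADI splitting, namely that $M$ and $N$ have their fields of values in the closed right half-plane, so that $H=\alpha I+M$ and $S=\alpha I+N$ are nonsingular for every $\alpha>0$ and the Kellogg-type contraction bounds $\|H^{-1}(\alpha I-M)\|_2<1$ and $\|S^{-1}(\alpha I-N)\|_2<1$ hold. The admissible range $\omega\in[0,2)$ keeps the weight $(2-\omega)\alpha$ strictly positive, which is exactly what prevents the iteration from stalling on $\ker A$.

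The main obstacle will be the last bundling step: translating two one-sided Kellogg bounds on $H$ and $S$ separately into a two-sided estimate on $T$, since $T$ is not literally the product of those two individual contractions. I expect to handle this by introducing a similarity transformation (for instance conjugation by $H$, or passage to a weighted inner product in which the splittings become normal) that reshapes $T$ into a product form amenable to the classical HSS-style resolvent calculation. At $\omega=1$ the analysis reduces to the familiar HSS bound, so the genuine technical content is showing that the extra degree of freedom in $\omega$ does not destroy contractivity throughout $[0,2)$; once $\rho(T)<1$ is established, convergence of $e^{(k+1)} = T e^{(k)}$ to zero, and hence of $x^{(k)}$ to $x$, follows from the standard characterization of asymptotic stability for linear stationary iterations.
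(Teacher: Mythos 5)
First, a point of reference: the paper itself contains no proof of Theorem \ref{thm1}; it is restated from Jiang et al.\ (2022), so your attempt can only be measured against that source. Within your proposal, the first half is correct and matches the standard route: $x$ is a fixed point of both half-steps, eliminating $x^{(k+\frac12)}$ gives the bracketed matrix $(\alpha I+M)\bigl(N-(1-\omega)\alpha I\bigr)+(2-\omega)\alpha(\alpha I-N)$, this expands to $\alpha^2I+MN-(1-\omega)\alpha A$, and the rewriting $T=I-(2-\omega)\alpha (HS)^{-1}A$ is also right. You are likewise correct that the restated theorem suppresses positivity hypotheses on $M$ and $N$ that any proof must use.

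The genuine gap is that the substance of the theorem, $\rho(T(\alpha,\omega))<1$ for all $\omega\in[0,2)$, is never established: you explicitly defer the "bundling" of the two Kellogg bounds to an unspecified similarity transformation or weighted inner product, and as written the plan does not close. The missing ingredient is purely algebraic, namely the convex-splitting identity
\begin{align*}
\alpha^2 I + MN - (1-\omega)\alpha A
= \Bigl(1-\tfrac{\omega}{2}\Bigr)(\alpha I - M)(\alpha I - N)
+ \tfrac{\omega}{2}\,(\alpha I + M)(\alpha I + N),
\end{align*}
which yields $T(\alpha,\omega)=\tfrac{\omega}{2}I+\bigl(1-\tfrac{\omega}{2}\bigr)S^{-1}H^{-1}(\alpha I-M)(\alpha I-N)$. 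The second summand is similar (conjugating by $S$) to $\bigl[(\alpha I+M)^{-1}(\alpha I-M)\bigr]\bigl[(\alpha I-N)(\alpha I+N)^{-1}\bigr]$, since rational functions of the same matrix commute, so the two one-sided Kellogg factors do multiply directly and give $|\lambda|<1$ for every eigenvalue $\lambda$ of that term (strictness requires positive definiteness, not mere semidefiniteness, of the relevant Hermitian part -- another hypothesis you must pin down). Then each eigenvalue of $T$ is $\tfrac{\omega}{2}+\bigl(1-\tfrac{\omega}{2}\bigr)\lambda$ with $\tfrac{\omega}{2}\in[0,1)$, hence strictly inside the unit disk, and convergence follows as you say. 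Without this identity (or an equivalent device) the case of general $\omega$ is simply not handled; note also that your calibration point is off -- it is $\omega=0$, not $\omega=1$, at which GADI reduces to the classical ADI/HSS iteration matrix $S^{-1}H^{-1}(\alpha I-M)(\alpha I-N)$.
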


The GADI
framework for solving large sparse linear systems, 
including its full-precision error analysis and 
convergence properties have been thoroughly 
investigated\cite{doi:10.1137/21M1450197}. 
It\cite{doi:10.1137/21M1450197} unifies 
existing ADI methods and introduces new schemes, 
while addressing the critical issue 
of parameter selection by employing Gaussian Process Regression (GPR) method 
for efficient prediction. Numerical 
results demonstrate that the GADI 
framework significantly improves computational 
performance and scalability, solving much larger 
systems than traditional methods while maintaining accuracy.

Building on this foundation\cite{doi:10.1137/21M1450197}, 
we designed a mixed-precision GADI algorithm GADI-IR 
that strategically combines low-precision 
and high-precision computations. Computationally 
intensive yet numerically stable operations, 
such as matrix-vector multiplications, are 
performed in FP16 to leverage its speed and 
efficiency, while critical steps like residual 
corrections, parameter updates, and convergence 
checks are handled in higher precision to ensure robustness.

\begin{algorithm}
    \begin{algorithmic}
        \Require: $\alpha,\omega,\xi,H,S,\varepsilon,\hat{r}^{(0)},k=0,\hat{x}^{(0)}=1$
        \While{$\|\hat{r}^{(k)}\|_2^2>\|\hat{r}^{(0)}\|_2^2\xi$}
        \State{Step 1: $\hat{r}^{(k)}=b-A\hat{x}^{(k)}$}\Comment{$u_f$}
        \State{Step 2: Solve $H\hat{z}^{(k)}\approx\hat{r}^{(k)}$ such that $tol\leq \varepsilon\|\hat{r}^{(0)}\|_2$}\Comment{$u_r$}
        \State{Step 3: Solve $S\hat{y}^{(k)}\approx(2-\omega)\alpha \hat{z}^{(k)}$ such that $tol\leq \varepsilon\|\hat{r}^{(0)}\|_2$}\Comment{$u_r$}
        \State{Step 4: Compute $\hat{x}^{(k+1)}=\hat{x}^{(k)}+\hat{y}^{(k)}$}\Comment{$u$}
        \State{Step 5: Compute $k=k+1$}
        \EndWhile
    \end{algorithmic}
    \caption{GADI-IR}
    \label{alg1}
\end{algorithm}

We present the rounding error analysis of Algorithm \ref{alg1} in the following sections, 
which include forward error bounds and backward error bounds in section~\ref{sec:ea}.  The significance 
of regularization to avoid underflow
and overflow when half precision is used is explained in section~\ref{sec:pr}. 
In this section we also 
specialize the results of Gaussian Process Regression (GPR) prediction to the GADI-IR algorithm and compare it with 
the regularization method.
Numerical experiments presented in
section~\ref{sec:ne} confirm the predictions of the analysis. Conclusions are given in 
section~\ref{sec:con}.

\subsection{Forward analysis}
    Let step 2 and step 3 in Algorithm \ref{alg1} 
    performed using a backward stable algorithm, then 
    there exists $G_k$ and $F_k$ such that:
    \begin{align}
        &(H+G_k)\hat{z}^{(k)}=\hat{r}^{(k)},\\
        &(S+F_k)\hat{y}^{(k)}=(2-\omega)\alpha\hat{z}^{(k)},
    \end{align}
    where:
    $$
    \|G_k\|\leq\phi(n)u_r\|M\|,
    $$
    $$
    \|F_k\|\leq\varphi(n)u_r\|N\|.
    $$
    where $\phi(n)$ and $\varphi(n)$ are reasonably small 
    functions of matrix size $n$.

Considering the computation of $\hat{r}^{(k)}$, There are two stages. 
First, $\hat{s}^{(k)} = fl_f(b - 
A\hat{x}^{(k)})$ is formed in precision $u_f$, so that:
\begin{align}
    \|\hat{s}^{(k)}\|\leq\varphi_1(n)u_f(\|A\|\|\hat{x}^{(k)}\|+\|b\|).
\end{align}
Second, the residual is rounded to precision $u_r$, so 
$\hat{r}^{(k)}=fl_r(\hat{s}^{(k)})=\hat{s}^{(k)}+f_k$. Hence:
\begin{align}
    \hat{r}^{(k)}=b-A\hat{x}^{(k)}+\Delta \hat{r}^{(k)},
\end{align}
where:
$$
\|\Delta \hat{r}^{(k)}\|\leq u_r\|b-A\hat{x}^{(k)}\|+
(1+u_r)u_f(\|A\|\|\hat{x}^{(k)}\|+\|b\|).
$$

So the classical error bounds in GADI-IR 
in steps 1 and 4 
are hold:
\begin{align}
    &\hat{r}^{(k)}=fl_f(b-A\hat{x}^{(k)})=b-A\hat{x}^{(k)}+\Delta \hat{r}^{(k)},\\
    &\hat{x}^{(k+1)}=fl(\hat{x}^{(k)}+\hat{y}^{(k)})=\hat{x}^{(k)}+\hat{y}^{(k)}+\Delta \hat{x}^{(k)},
\end{align}
where:
\begin{align}
\|\Delta \hat{r}^{(k)}\|&\leq u_r\|b-A\hat{x}^{(k)}\|+
(1+u_r)u_f(\|A\|\|\hat{x}^{(k)}\|+\|b\|),\\
\|\Delta \hat{x}^{(k)}\|&\leq \varphi_2(n)
u(\|\hat{x}^{(k)}\|+\|\hat{y}^{(k)}\|).
\end{align}

\begin{lemma}[\cite{10.5555/500666}]
    If $\phi(n)\kappa(S)u_r<1/2$, then $(S+F_k)$ is 
    non singular and 
    $$
    (S+F_k)^{-1}=(I+J_k)S^{-1}$$
    where$$\|J_k\|\leq\frac{\varphi(n)\kappa(S)u_r}
    {1-\phi(n)\kappa(S)u_r}<1.
    $$
    \label{lem1}
\end{lemma}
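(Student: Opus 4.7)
The plan is to treat this as a standard Banach/Neumann perturbation lemma for matrix inverses. The first step is to factor $S$ out on the left: write $S+F_k = S(I + S^{-1}F_k)$, so the question of invertibility of $S + F_k$ reduces to that of $I + S^{-1}F_k$, and explicit inversion of the latter will automatically furnish the stated factorization of $(S+F_k)^{-1}$.

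Next I would bound $\|S^{-1}F_k\|$. Using submultiplicativity and the bound $\|F_k\| \leq \varphi(n) u_r \|N\|$ established in the preceding lemma, together with the fact that the backward-stable solver in step 3 of Algorithm~\ref{alg1} actually operates on $S = \alpha I + N$ (so the $\|N\|$ factor can be absorbed into a $\|S\|$ factor by adjusting the low-order polynomial $\varphi$), one obtains $\|S^{-1}F_k\| \leq \varphi(n)\, u_r\, \kappa(S)$. Under the hypothesis $\phi(n)\kappa(S)u_r < 1/2$ (and, implicitly, that $\varphi(n) \leq \phi(n)$ after relabeling the low-order factors), this quantity is strictly less than $1/2$, and in particular less than $1$.

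With $\|S^{-1}F_k\| < 1$ in hand, the Neumann series $\sum_{j\geq 0}(-S^{-1}F_k)^j$ converges in operator norm to $(I + S^{-1}F_k)^{-1}$, proving that $S + F_k$ is nonsingular. Defining $J_k := (I + S^{-1}F_k)^{-1} - I = -S^{-1}F_k(I + S^{-1}F_k)^{-1}$ yields the required identity $(S+F_k)^{-1} = (I + J_k)S^{-1}$. The standard geometric-series estimate then gives $\|J_k\| \leq \|S^{-1}F_k\|/(1 - \|S^{-1}F_k\|)$, and substituting the earlier norm bound recovers $\|J_k\| \leq \varphi(n)\kappa(S)u_r/(1 - \phi(n)\kappa(S)u_r)$. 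The terminal strict inequality $\|J_k\| < 1$ follows because the hypothesis forces the denominator to exceed $1/2$ while the numerator is at most $1/2$.

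The main obstacle is essentially bookkeeping rather than conceptual: one must justify replacing $\|N\|$ by $\|S\|$ (so that $\kappa(S)$ — not the looser product $\|S^{-1}\|\|N\|$ — appears), and one must reconcile the two distinct low-order functions $\phi$ and $\varphi$ that appear on the numerator and denominator of the final fraction. Once those constants are tracked, the rest is a one-line application of the Banach lemma.
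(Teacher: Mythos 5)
Your proof is correct and follows the standard route: the paper does not actually prove this lemma---it is quoted from the cited reference (Higham's \emph{Accuracy and Stability of Numerical Algorithms})---and the argument underlying that result is exactly your factorization $S+F_k=S(I+S^{-1}F_k)$ followed by the Neumann-series/Banach inversion bound, giving $(S+F_k)^{-1}=(I+J_k)S^{-1}$ with $\|J_k\|\leq\|S^{-1}F_k\|/(1-\|S^{-1}F_k\|)$. The two bookkeeping points you flag are real but are defects of the paper's statement rather than of your argument: a backward-stable solve with coefficient matrix $S$ should carry the bound $\|F_k\|\leq\varphi(n)u_r\|S\|$ (not $\|N\|$), which is what makes $\kappa(S)$ appear, and the $\phi$/$\varphi$ mismatch between the hypothesis and the displayed bound is evidently a notational slip, so your relabeling $\varphi\leq\phi$ is the right reading.
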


Then we can prove the following theorem.
\begin{theorem}
    Let Algorithm \ref{alg1} be applied to the linear system $A x=b$ , where $A \in\mathbb{R}^{n \times n}$ is nonsingular, 
    and assume the solver used in step 2 and 3 is backward stable. For $k \geq0$ the computed iterate $\hat{x}_{k+1}$ satisfies
    $$
    \|x-\hat{x}_{k+1}\|\leq \alpha_F \|x-\hat{x}_k\|+\beta_F\|x\|,
    $$
    where
    \begin{align*}
        \alpha_F&=\lambda+(2-\omega)\theta\hat{\kappa}(HS)+\varphi_2(n)u+4(2-\omega)\varphi_2(n)\hat{\kappa}(HS)u(1+u_r)(1+u_f)\notag\\
        &+4(2-\omega)\hat{\kappa}(HS)u_r+4(2-\omega)(1+u_r)u_f\hat{\kappa}(HS), \\
        \beta_F&=\varphi_2(n)u+4(2-\omega)\varphi_2(n)\hat{\kappa}(HS)u(1+u_r)u_f+8(2-\omega)(1+u_r)u_f\hat{\kappa}(HS),
    \end{align*}
    and
    \begin{align*}
        \hat{\kappa}(HS)=\kappa(H)\kappa(S)\frac{\alpha\|H\|+\alpha\|S\|+2\alpha^2}{\|H\|\|S\|}.
    \end{align*}
    \label{thm2}
\end{theorem}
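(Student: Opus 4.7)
The plan is to derive a linear recursion for the forward error $e_k := \hat{x}_k - x$ and then bound each rounding contribution in turn. From Step 4 of Algorithm~\ref{alg1} we have $e_{k+1} = e_k + \hat{y}_k + \Delta\hat{x}_k$. Using the backward-error identities stated for Steps 2 and 3, together with Lemma~\ref{lem1} applied to $S$ and, by the same argument, to $H$, I would write $(H+G_k)^{-1} = (I+J_k^H)H^{-1}$ and $(S+F_k)^{-1} = (I+J_k^S)S^{-1}$ with $\|J_k^H\|, \|J_k^S\| \le \theta$ for a common quantity of order $u_r\kappa(H)$ and $u_r\kappa(S)$. Substituting the rounded residual $\hat{r}_k = -Ae_k + \Delta\hat{r}_k$ and expanding then yields
\[
e_{k+1} = T e_k - p P_k A e_k + p(I+J_k^S)S^{-1}(I+J_k^H)H^{-1}\Delta\hat{r}_k + \Delta\hat{x}_k,
\]
where $T = I - pS^{-1}H^{-1}A$ is the exact GADI iteration matrix (whose norm is bounded by $\lambda$ through Theorem~\ref{thm1}) and $P_k = J_k^S S^{-1}H^{-1} + S^{-1}J_k^H H^{-1} + J_k^S S^{-1}J_k^H H^{-1}$ collects the cross-terms.

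The next step is to bound each piece. The key norm identity is $p\,\|S^{-1}\|\|H^{-1}\|\|A\| \le (2-\omega)\hat{\kappa}(HS)$, which follows directly from $A = (H-\alpha I)+(S-\alpha I)$ and the definition of $\hat{\kappa}(HS)$. For the residual perturbation I would use $\|\Delta\hat{r}_k\| \le u_r\|A\|\|e_k\| + (1+u_r)u_f\|A\|(2\|x\|+\|e_k\|)$, obtained via $\|\hat{x}_k\| \le \|x\|+\|e_k\|$ and $\|b\| \le \|A\|\|x\|$; this produces terms of the form $(2-\omega)\hat{\kappa}(HS)\,u_r\,\|e_k\|$ and $(2-\omega)(1+u_r)u_f\hat{\kappa}(HS)$ applied to $\|e_k\|$ and to $\|x\|$. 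The $P_k$ contribution gives the coefficient $(2-\omega)\theta\hat{\kappa}(HS)\|e_k\|$. Finally, to bound $\|\Delta\hat{x}_k\| \le \varphi_2(n)u(\|\hat{x}_k\|+\|\hat{y}_k\|)$ I would combine $\|\hat{x}_k\| \le \|x\|+\|e_k\|$ with the induced estimate $\|\hat{y}_k\| \le (1+\theta)^2(2-\omega)\hat{\kappa}(HS)(\|e_k\| + \|\Delta\hat{r}_k\|/\|A\|)$; this generates the mixed coefficients $\varphi_2(n)u$ and $4(2-\omega)\varphi_2(n)\hat{\kappa}(HS)\,u(1+u_r)(1+u_f)$. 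Collecting like terms splits the bound into the desired $\alpha_F\|e_k\| + \beta_F\|x\|$.

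The main obstacle is the bookkeeping in this last step rather than any single deep estimate. Bounding $\|\Delta\hat{x}_k\|$ forces one to re-express $\|\hat{y}_k\|$ in terms of $\|e_k\|$ and $\|x\|$, generating products of up to three small factors that contribute to \emph{both} $\alpha_F$ and $\beta_F$, and one must verify that factors such as $(1+\theta)^2$ and the various cross-term combinations can be bounded by the specific numerical constants (notably the factor $4$) that appear in the stated result, without overcounting. Once this accounting is carried out cleanly, the theorem follows by taking norms of the displayed recursion and grouping the coefficients according to whether they multiply $\|e_k\|$ or $\|x\|$.
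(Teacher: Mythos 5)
Your proposal follows essentially the same route as the paper's proof: the same error recursion from Step 4, the backward-stability identities with Lemma~\ref{lem1} applied to both $H$ and $S$, identification of $I-pS^{-1}H^{-1}A$ with the GADI iteration matrix $T(\alpha,\omega)$, the bound $p\|H^{-1}\|\|S^{-1}\|\|A\|\le(2-\omega)\hat{\kappa}(HS)$ via $A=H+S-2\alpha I$, and the same estimates of $\|\Delta\hat{r}_k\|$, $\|\hat{y}_k\|$, and $\|\Delta\hat{x}_k\|$ before collecting coefficients of $\|x-\hat{x}_k\|$ and $\|x\|$. The only differences are notational (your $\theta$ bounds each perturbation rather than the sum of cross-terms, and your $(1+\theta)^2$ plays the role of the paper's factor $4$), so the argument is correct and matches the paper's.
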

\begin{proof}
    First, the error between the exact solution $x$ and the $k$th iterative 
    solution $\hat{x}^{(k+1)}$ need to be estimated. 
    From equation (2.7) it comes:
    \begin{align*}
        x-\hat{x}^{(k+1)}=x-\hat{x}^{(k)}-\hat{y}^{(k)}-\Delta \hat{x}^{(k)},
    \end{align*}
    and then using equation (2.2), (2.3) and (2.5), the 
    error between the exact solution $x$ and the $k$th iterative 
    solution $\hat{x}^{(k+1)}$ can be represented as the following equation:
    \begin{align*}
        &x-\hat{x}^{(k+1)}\\
        =&x-\hat{x}^{(k)}-\hat{y}^{(k)}-\Delta \hat{x}^{(k)}\\
        =&x-\hat{x}^{(k)}-(S+F_k)^{-1}(2-\omega)\alpha \hat{z}^{(k)}-\Delta \hat{x}^{(k)}\\
        =&x-\hat{x}^{(k)}-(S+F_k)^{-1}(H+G_k)^{-1}(2-\omega)\alpha \hat{r}^{(k)}-\Delta \hat{x}^{(k)}\\
        =&x-\hat{x}^{(k)}-(I+J_k)S^{-1}(I+L_k)H^{-1}p\hat{r}^{(k)}-\Delta \hat{x}^{(k)}\\
        =&x-\hat{x}^{(k)}-(I+J_k)S^{-1}(I+L_k)H^{-1}p(b-A\hat{x}^{(k)}+\Delta\hat{r}^{(k)})\\
        &-\Delta \hat{x}^{(k)}\\
        =&x-\hat{x}^{(k)}-\Delta \hat{x}^{(k)}\\
        &-(I+J_k)S^{-1}(I+L_k)H^{-1}Ap(x-\hat{x}^{(k)}+A^{-1}\Delta\hat{r}^{(k)})\\
        =&x-\hat{x}^{(k)}-\Delta \hat{x}^{(k)}-(HS)^{-1}Ap(x-\hat{x}^{(k)})\\
        &-S^{-1}L_kH^{-1}Ap(x-\hat{x}^{(k)})-J_kS^{-1}H^{-1}Ap(x-\hat{x}^{(k)})\\
        &-J_kS^{-1}L_kH^{-1}Ap(x-\hat{x}^{(k)})\\
        &-(I+J_k)S^{-1}(I+L_k)H^{-1}p\Delta\hat{r}^{(k)}.
    \end{align*}

    Considering the iterative matrix $T(\alpha,\omega)$ of GADI framework in theorem \ref{thm1}, 
    the following equation can be obtained: 
    \begin{align*}
        x-\hat{x}^{(k+1)}=&T(\alpha,\omega)(x-\hat{x}^{(k)})-\Delta \hat{x}^{(k)}\\
        &-S^{-1}L_kH^{-1}Ap(x-\hat{x}^{(k)})-J_kS^{-1}H^{-1}Ap(x-\hat{x}^{(k)})\\
        &-J_kS^{-1}L_kH^{-1}Ap(x-\hat{x}^{(k)})\\
        &-(I+J_k)S^{-1}(I+L_k)H^{-1}p\Delta\hat{r}^{(k)}.
    \end{align*}

    Taking the norms of both sides of the last equation and using the fact that $\|J_k\| < 1$ 
    in lemma \ref{lem1}, we have the following inequality for the norm error 
    of the exact solution and the $k$th iterative solution:
    \begin{align*}
        &\|x-\hat{x}^{(k+1)}\|\\
        \leq&\|T(\alpha,\omega)(x-\hat{x}^{(k)})\|+\|\Delta \hat{x}^{(k)}\|\\
        &+p\|L_k\|\|H^{-1}\|\|S^{-1}\|\|A\|\|x-\hat{x}^{(k)}\|\\
        &+p\|J_k\|\|S^{-1}\|\|H^{-1}\|\|A\|\|x-\hat{x}^{(k)}\|\\
        &+p\|J_k\|\|L_k\|\|H^{-1}\|\|S^{-1}\|\|A\|\|x-\hat{x}^{(k)}\|\\
        &+p\|H^{-1}\|\|S^{-1}\|\|I+J_k\|\|I+L_k\|\|\Delta\hat{r}^{(k)}\|\\
        \leq&\|T(\alpha,\omega)(x-\hat{x}^{(k)})\|+\|\Delta \hat{x}^{(k)}\|\\
        &+p\|H^{-1}\|\|S^{-1}\|\|A\|(\|J_k\|+\|L_k\|+\|J_k\|\|L_k\|)\|x-\hat{x}^{(k)}\|\\
        &+4p\|H^{-1}\|\|S^{-1}\|\|\Delta\hat{r}^{(k)}\|.
    \end{align*}

    To make the equation clear, let $\|J_k\|+\|L_k\|+\|J_k\|\|L_k\|=\theta$ 
    and use equation (2.8) and (2.9), the inequality can be simplified as:
    \begin{equation}
        \begin{aligned}
            &\|x-\hat{x}^{(k+1)}\|\\
            \leq&\|T(\alpha,\omega)(x-\hat{x}^{(k)})\|+p\theta\|H^{-1}\|\|S^{-1}\|\|A\|\|x-\hat{x}^{(k)}\|\\
            &+\varphi_2(n)
            u(\|\hat{x}^{(k)}\|+\|\hat{y}^{(k)}\|)\\
            &+4p\|H^{-1}\|\|S^{-1}\|(u_r\|A\|\|x-\hat{x}^{(k)}\|\\
            &+(1+u_r)u_f(\|A\|\|\hat{x}^{(k)}\|+\|b\|)).
        \end{aligned}
    \end{equation}

    So we have the norm error estimating formula between the exact solution $x$ and the $k$-th iterative 
    solution $\hat{x}^{(k+1)}$. Next, it is necessary to estimate the 
    norm error of the $k$th $\|\hat{y}^{(k)}\|$.

    Using the fact that $A$ can be splitted as $A=M+N$, and $H=\alpha I +M,S=\alpha I+N$, 
    then 
    the norm inequality exists:
    \begin{equation}
        \begin{aligned}
            \alpha\|H^{-1}\|\|S^{-1}\|\|A\|&\leq\alpha\|H^{-1}\|\|S^{-1}\|\|H+S-2\alpha I\|\\
            &\leq\alpha\|H^{-1}\|\|S^{-1}\|(\|H\|+\|S\|+2|\alpha|)\\
            &\leq\kappa(H)\kappa(S)\frac{\alpha\|H\|+\alpha\|S\|+2\alpha^2}{\|H\|\|S\|}.
        \end{aligned}
    \end{equation}
    Again, triangle inequality yields:
    \begin{equation}
        \begin{aligned}
            \|\hat{x}^{(k)}\|&\leq\|x\|+\|\hat{x}^{(k)}-x\|,
        \end{aligned}
    \end{equation}
    then, combining (2.13) with the fact that $Ax = b$, the following inequality can be derived:
    \begin{equation}
        \begin{aligned}
            \|A\|\|\hat{x}^{(k)}\|+\|b\|&\leq\|A\|\|x-\hat{x}^{(k)}\|+2\|A\|\|x\|.
        \end{aligned}
    \end{equation}
    Using (2.6) and (2.8), the estimate of $\|\hat{r}^{(k)}\|$ is:
    \begin{align*}
        \|\hat{r}^{(k)}\|&\leq\|b-A\hat{x}^{(k)}\|+\|\Delta\hat{r}^{(k)}\|\notag\\
        &\leq(1+u_r)\|A\|\|x-\hat{x}^{(k)}\|\notag\\
        &+(1+u_r)u_f(\|A\|\|\hat{x}^{(k)}\|+\|b\|).
    \end{align*}
    Combining the above inequalities (2.13),(2.14) and (2.15) 
    with lemma \ref{lem1}, there exists the estimate of $\|\hat{y}^{(k)}\|$:

    \begin{equation}
        \begin{aligned}
            \|\hat{y}^{(k)}\|=&p\|(S+F_k)^{-1}\|\|\hat{z}^{(k)}\|\\
            =&p\|(S+F_k)^{-1}(H+G_k)^{-1}\hat{r}^{(k)}\|\\
            =&p\|(I+J_k)S^{-1}(I+L_k)H^{-1}\hat{r}^{(k)}\|\\
            \leq& 4p\|H^{-1}\|\|S^{-1}\|\|\hat{r}^{(k)}\|\\
            \leq& 4p\|H^{-1}\|\|S^{-1}\|(\|A\|\|x-\hat{x}^{(k)}\|+u_r\|A\|\|x-\hat{x}^{(k)}\|\\
            &+(1+u_r)u_f(\|A\|\|\hat{x}^{(k)}\|+\|b\|))\\
            \leq& 4p(1+u_r)\|H^{-1}\|\|S^{-1}\|(\|A\|\|x-\hat{x}^{(k)}\|)\\
            &+4p(1+u_r)u_f\|H^{-1}\|\|S^{-1}\|(\|A\|\|\hat{x}^{(k)}\|+\|b\|)\\
            \leq& 4p(1+u_r)\|H^{-1}\|\|S^{-1}\|(\|A\|\|x-\hat{x}^{(k)}\|)\\
            &+4p(1+u_r)u_f\|H^{-1}\|\|S^{-1}\|(\|A\|\|x-\hat{x}^{(k)}\|+2\|A\|\|x\|)\\
            =& 4p(1+u_r)(1+u_f)\|H^{-1}\|\|S^{-1}\|\|A\|\|x-\hat{x}^{(k)}\|\\
            &+8p(1+u_r)u_f\|H^{-1}\|\|S^{-1}\|\|A\|\|x\|.
        \end{aligned}
    \end{equation}
    
    Theorem \ref{thm1} states that the radius of the iterative matrix $T(\alpha,\omega)$ 
    is $\rho(T(\alpha,\omega))$ and 
    $\rho(T(\alpha,\omega))<1$, so the following inequality can be derived: 
    \begin{align*}
        \|T(\alpha,\omega)(x-\hat{x}^{(k)})\|\leq\lambda\|x-\hat{x}^{(k)}\|,
    \end{align*}
    where $\lambda<1$.
    Injecting equations (2.12), (2.13), (2.14), (2.15) and (2.16) in equation (2.11) and 
    let $\kappa(H)\kappa(S)\frac{\alpha\|H\|+\alpha\|S\|+2\alpha^2}{\|H\|\|S\|}=\hat{\kappa}(HS)$ yields:
    \begin{equation}
        \begin{aligned}
            &\|x-\hat{x}^{(k+1)}\|\\
            \leq&\|T(\alpha,\omega)(x-\hat{x}^{(k)})\|+(2-\omega)\theta\hat{\kappa}(HS)\|x-\hat{x}^{(k)}\|+\varphi_2(n)u\|x\|
            +\varphi_2(n)u\|x-\hat{x}^{(k)}\|\\
            &+4(2-\omega)\varphi_2(n)\hat{\kappa}(HS)u(1+u_r)((1+u_f)\|x-\hat{x}^{(k)}\|
            +2u_f\|x\|)\\
            &+4(2-\omega)\hat{\kappa}(HS)u_r\|x-\hat{x}^{(k)}\|+
            4(2-\omega)(1+u_r)u_f\hat{\kappa}(HS)(\|x-\hat{x}^{(k)}\|+2\|x\|)\\
            \leq&(\lambda+(2-\omega)\theta\hat{\kappa}(HS)+\varphi_2(n)u+
            4(2-\omega)\varphi_2(n)\hat{\kappa}(HS)u(1+u_r)(1+u_f)\\
            &+4(2-\omega)\hat{\kappa}(HS)u_r+
            4(2-\omega)(1+u_r)u_f\hat{\kappa}(HS))\|x-\hat{x}^k\|\\
            &+(\varphi_2(n)u+4(2-\omega)\varphi_2(n)\hat{\kappa}(HS)u(1+u_r)u_f+
            8(2-\omega)(1+u_r)u_f\hat{\kappa}(HS))\|x\|\\
            =&\alpha_F\|x-\hat{x}^{(k)}\|+\beta_F\|x\|,
        \end{aligned}
    \end{equation}
    where:
    \begin{align*}
        \alpha_F&=\lambda+(2-\omega)\theta\hat{\kappa}(HS)+
        \varphi_2(n)u+4(2-\omega)\varphi_2(n)\hat{\kappa}(HS)u(1+u_r)(1+u_f)\notag\\
        &+4(2-\omega)\hat{\kappa}(HS)u_r+4(2-\omega)(1+u_r)u_f\hat{\kappa}(HS), \\
        \beta_F&=\varphi_2(n)u+4(2-\omega)\varphi_2(n)\hat{\kappa}(HS)u(1+u_r)u_f+
        8(2-\omega)(1+u_r)u_f\hat{\kappa}(HS).
    \end{align*}
\end{proof}

According to theorem \ref{thm2}, we have the following corollary that 
provides the error estimation of GADI-IR.
\begin{corollary}
Let $x$ be the exact solution of $Ax=b$ and $x^*$ be the solution 
calculated by GADI-IR, then we have the 
following error estimation: 
\begin{equation}
    \begin{aligned}
        \lim_{k\rightarrow\infty}\|x-\hat{x}_{k}\|&=\|x-x^*\|\\
        &\leq\beta_F(1-\alpha_F)^{-1}\|x\|\\
        &=\frac{\phi_F(n)\hat{\kappa}(HS)u}{1-\psi_F(n)\hat{\kappa}(HS)u_r}\|x\|.
    \end{aligned}
\end{equation}
\label{cor1}
\end{corollary}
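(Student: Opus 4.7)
The plan is to treat Corollary \ref{cor1} as a direct consequence of iterating the one-step contraction in Theorem \ref{thm2}. Starting from
$$\|x-\hat{x}^{(k+1)}\|\leq \alpha_F\,\|x-\hat{x}^{(k)}\|+\beta_F\,\|x\|,$$
I would unroll the recursion to obtain
$$\|x-\hat{x}^{(k)}\|\leq \alpha_F^{k}\,\|x-\hat{x}^{(0)}\|+\beta_F\,\|x\|\sum_{j=0}^{k-1}\alpha_F^{j}.$$
Assuming the convergence condition $\alpha_F<1$ (which I will justify below), the first term vanishes as $k\to\infty$ and the geometric series sums to $(1-\alpha_F)^{-1}$, giving
$$\|x-x^{*}\|=\lim_{k\to\infty}\|x-\hat{x}^{(k)}\|\leq \beta_F\,(1-\alpha_F)^{-1}\|x\|,$$
which is the first inequality claimed.

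The second equality is an asymptotic simplification: I would collect the dominant contributions in $\beta_F$ and in $1-\alpha_F$ and absorb everything else into the polynomial factors $\phi_F(n)$ and $\psi_F(n)$. Examining $\beta_F$, the first summand $\varphi_2(n)u$ dominates the working-precision contribution, while the terms $4(2-\omega)\varphi_2(n)\hat{\kappa}(HS)u(1+u_r)u_f$ and $8(2-\omega)(1+u_r)u_f\hat{\kappa}(HS)$ are $O(\hat{\kappa}(HS)\,u_f)$ and therefore negligible against $O(\hat{\kappa}(HS)\,u)$ once $u_f\leq u$ is assumed; so $\beta_F = \phi_F(n)\,\hat{\kappa}(HS)\,u$ up to higher-order terms. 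For $1-\alpha_F$, I would regard $\lambda$ (the spectral-radius contraction of the exact GADI iteration) as bounded strictly below $1$ and write $1-\alpha_F = (1-\lambda) - \big[(2-\omega)\theta\hat{\kappa}(HS) + O(u) + O(\hat{\kappa}(HS)u_r) + O(\hat{\kappa}(HS)u_f)\big]$. Because $\theta=\|J_k\|+\|L_k\|+\|J_k\|\|L_k\|$ is itself $O(\kappa\,u_r)$ by Lemma \ref{lem1}, every perturbation of $(1-\lambda)$ has the form $\hat{\kappa}(HS)\,u_r$ (with $u_f\leq u_r$), and grouping these yields $1-\alpha_F \sim 1-\psi_F(n)\hat{\kappa}(HS)u_r$ after normalising by $1-\lambda$ into the polynomial $\psi_F(n)$.

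The main obstacle, and the one I would handle most carefully, is justifying $\alpha_F<1$ so that the geometric-series step is legal. Since $\lambda=\rho(T(\alpha,\omega))<1$ by Theorem \ref{thm1}, what must be checked is that the remaining terms in $\alpha_F$ — all of them proportional to $\hat{\kappa}(HS)$ times one of $u,u_r,u_f$ — stay below the gap $1-\lambda$. Under the standard mixed-precision hypothesis that $\hat{\kappa}(HS)\,u_r$ is bounded away from $(1-\lambda)/\psi_F(n)$ (which is exactly the hypothesis $\psi_F(n)\hat{\kappa}(HS)u_r<1$ implicit in the denominator of the final bound), this is automatic. Once this is established, the remainder of the proof is the routine unrolling and big-$O$ bookkeeping described above, so the corollary follows without further work.
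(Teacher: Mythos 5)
Your main argument is the same as the paper's: unroll the recursion from Theorem \ref{thm2}, sum the geometric series under $\alpha_F<1$, and pass to the limit to get $\|x-x^*\|\leq\beta_F(1-\alpha_F)^{-1}\|x\|$; in fact you are more careful than the paper here, since the paper simply asserts the convergence condition $\alpha_F<1$ and writes $\alpha_F=\psi_F(n)\hat{\kappa}(HS)u_r$, $\beta_F=\varphi_2(n)u+\phi_F(n)\hat{\kappa}(HS)u_f$ without discussing the gap $1-\lambda$, whereas you make the requirement $\psi_F(n)\hat{\kappa}(HS)u_r<1$ explicit.

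The one step where your reasoning diverges from the paper's setting is the simplification of $\beta_F$. You drop the terms $4(2-\omega)\varphi_2(n)\hat{\kappa}(HS)u(1+u_r)u_f$ and $8(2-\omega)(1+u_r)u_f\hat{\kappa}(HS)$ on the grounds that $u_f\leq u$, but the paper's precision hierarchy is the opposite, $u\leq u_f\leq u_r$ (the residual precision $u_f$ is coarser than or equal to the working precision $u$; e.g.\ $u=$ double, $u_f=$ single, $u_r=$ half in the experiments). Under that ordering the $\hat{\kappa}(HS)u_f$ contribution dominates rather than being negligible, and indeed the paper's own proof keeps the numerator as $\varphi_2(n)u+\phi_F(n)\hat{\kappa}(HS)u_f$; the $u$-only numerator in the corollary statement is at best shorthand (or a typo) for this. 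So your derivation of the final closed form rests on an inverted precision assumption and would fail in the regime the paper actually considers; the limiting accuracy should be stated as governed by $u$ and $u_f$, not by $u$ alone. The first inequality of the corollary, which is the substantive claim, is established correctly by your argument.
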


\begin{proof}

To make sure that the result to converge to the exact solution, it is necessary 
that 
    \begin{align*}
        \alpha_F <1, \\
        \beta_F <1.
    \end{align*}

Using theorem \ref{thm2}, we have
\begin{align*}
&\lim_{k\rightarrow\infty}\|x-\hat{x}_{k+1}\|\\
&\leq\lim_{k\rightarrow\infty}(\alpha_F\|x-\hat{x}_k\|+\beta_F\|x\|)\\
&\leq\lim_{k\rightarrow\infty}(\alpha_F^k\|x-\hat{x}_1\|+\beta_F\frac{1-\alpha_F^k}{1-\alpha_F}\|x\|).
\end{align*}

Note that $\alpha_F$ and $\beta_F$ are of the form:
$$
\alpha_{F}=\psi_{F} ( n ) \hat{\kappa}(HS) u_{r}, \beta_{F}=\varphi_2(n)u+\phi_{F} ( n ) \hat{\kappa}(HS) u_f, 
$$
are respectively determined by $u_r$ and $u$, so ${x_k}$ converges. Now we set 
$$\lim_{k\rightarrow\infty}\hat{x}_{k}=x^*.$$

Combining (2.15) and using the form of $\alpha_F$ and $\beta_F$, we have
\begin{equation}
    \begin{aligned}
        \lim_{k\rightarrow\infty}\|x-\hat{x}_{k}\|&=\|x-x^*\|\\
        &\leq\beta_F(1-\alpha_F)^{-1}\|x\|\\
        &=\frac{\varphi_2(n)u+\phi_{F} ( n ) \hat{\kappa}(HS) u_f}{1-\psi_F(n)\hat{\kappa}(HS)u_r}\|x\|.
    \end{aligned}
\end{equation}
\end{proof}

From corollary \ref{cor1}, it can be seen that 
the term $\alpha_F$ is the rate of convergence and depends 
on the condition number of the
matrix $H,S$, and the precision used $u_r$. The term $\beta_F$ is the limiting accuracy
of the method and depends on the precision accuracy used $u$.

\subsection{Backward analysis}
\begin{lemma}[\cite{10.5555/500666}]
    If $\mu(n)\kappa(S)u_r<1/2$, then $(H+F_k)$ is 
    non singular and 
    \begin{equation}
      \begin{aligned}
          (S+F_k)^{-1}=S^{-1}(I+P_k),
      \end{aligned}
  \end{equation}
  where:
  \begin{equation}
      \begin{aligned}
          \|P_k\|\leq\frac{\mu(n)\kappa(S)u_r}{1-\mu(n)\kappa(S)u_r}\leq1.
      \end{aligned}
  \end{equation}
  \label{lem2}
\end{lemma}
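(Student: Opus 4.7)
The plan is to mirror the proof of Lemma~\ref{lem1} but factor $S+F_k$ on the other side so that $S^{-1}$ appears on the left rather than the right. Concretely, I would write
\begin{align*}
S+F_k = (I+F_k S^{-1})\,S,
\end{align*}
so that, once the left factor is shown to be invertible, one immediately obtains
\begin{align*}
(S+F_k)^{-1} = S^{-1}(I+F_k S^{-1})^{-1},
\end{align*}
which is exactly the form $S^{-1}(I+P_k)$ with $P_k := (I+F_k S^{-1})^{-1} - I$.

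The next step is to control $\|F_k S^{-1}\|$. Using the backward stability bound stated earlier in the section, $\|F_k\|\le \varphi(n)u_r\|N\|$, and bounding $\|N\|$ by $\|S\|$ up to an $\alpha$-dependent factor absorbed into $\mu(n)$, I would obtain
\begin{align*}
\|F_k S^{-1}\| \le \|F_k\|\,\|S^{-1}\| \le \mu(n)\,\kappa(S)\,u_r < \tfrac{1}{2}
\end{align*}
by the hypothesis of the lemma. Since the operator norm of $F_k S^{-1}$ is strictly less than $1$, a standard Neumann-series argument shows that $I+F_k S^{-1}$ is invertible, and hence so is $S+F_k$.

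Finally, I would estimate $\|P_k\|$. Writing $P_k = -(I+F_k S^{-1})^{-1} F_k S^{-1}$ and applying the Neumann-series bound $\|(I+E)^{-1}\|\le 1/(1-\|E\|)$ with $E=F_k S^{-1}$, I get
\begin{align*}
\|P_k\| \le \frac{\|F_k S^{-1}\|}{1-\|F_k S^{-1}\|} \le \frac{\mu(n)\kappa(S)u_r}{1-\mu(n)\kappa(S)u_r}.
\end{align*}
The hypothesis $\mu(n)\kappa(S)u_r < 1/2$ makes the right-hand side at most $1$, yielding the claimed bound.

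No step is really a serious obstacle; the only mild subtlety is the choice of left-factorization (rather than the right-factorization used in Lemma~\ref{lem1}) to place $S^{-1}$ on the correct side, together with keeping the normalization of the constants consistent so that the same symbol $\mu(n)$ captures $\varphi(n)\|N\|/\|S\|$ and any slack from $\kappa$-based bounding. Everything else is a direct application of the Neumann series and the already-established perturbation bound on $F_k$.
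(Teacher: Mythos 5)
The paper states this lemma only as a citation to \cite{10.5555/500666} and gives no proof of its own, so there is nothing internal to compare against; your left-factorization $S+F_k=(I+F_kS^{-1})S$ followed by the Neumann-series bound is the standard argument behind the cited result and is correct (including the implicit reading of the statement's ``$H+F_k$'' as a typo for $S+F_k$). The only loose point, absorbing the factor $\|N\|/\|S\|$ into $\mu(n)$, matches the paper's own conventions in its companion Lemma~\ref{lem1} and does not affect the conclusion.
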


\begin{theorem}
    Let Algorithm \ref{alg1} be applied to a linear system $Ax = b$ with a nonsingular matrix $A\in\mathbb{R}^{n\times n}$  and assume the solver used in step 2 and 3 is backward stable. 
    Then for $k\geq 0$ the computed iterate $\hat{x}_{k+ 1}$ satisfies
\begin{equation}
    \begin{aligned}
        \frac{\|b-Ax_{k+1}\|}{\|A\|\|x_{k+1}\|}\leq\alpha_B\frac{\|b-Ax_{k}\|}{\|A\|\|x_{k}\|}+\beta_B,
    \end{aligned}
\end{equation}
where:
\begin{align*}
    \alpha_B=&\gamma(\lambda+(2-\omega)\eta\hat{\kappa}(HS)+
    4(2-\omega)\hat{\kappa}(HS)u_r+4(2-\omega)\hat{\kappa}(HS)(1+u_r)u_f),\\
    \beta_B=&8(2-\omega)\gamma\hat{\kappa}(HS)(1+u_r)u_f+\varphi_2(n)\gamma u\notag\\
    &+\varphi_2(n)(1-\varphi_2(n)u)^{-1}u(1+(1+\varphi_2(n)u)\gamma).
\end{align*}
and
\begin{align*}
    \hat{\kappa}(HS)&=\kappa(H)\kappa(S)\frac{\alpha\|H\|+\alpha\|S\|+2\alpha^2}{\|H\|\|S\|}.
\end{align*}
\label{thm3}
\end{theorem}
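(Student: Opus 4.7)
The plan is to track the true residual $r^{(k+1)} := b - A\hat{x}^{(k+1)}$ through one step of Algorithm~\ref{alg1}, to expose its leading behaviour as a contractive operator applied to $r^{(k)}$, and then to normalize by $\|A\|\|\hat{x}^{(k+1)}\|$ so as to obtain the desired recurrence for the normwise relative backward error. The overall architecture parallels the proof of Theorem~\ref{thm2}, with the focus shifted from the forward error $x - \hat{x}^{(k)}$ to the residual.

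First, substituting $\hat{x}^{(k+1)} = \hat{x}^{(k)} + \hat{y}^{(k)} + \Delta\hat{x}^{(k)}$ into $r^{(k+1)}$ and using the backward-stability relations from steps~2 and~3 together with $\hat{r}^{(k)} = (b - A\hat{x}^{(k)}) + \Delta\hat{r}^{(k)}$ from step~1 gives
\begin{align*}
r^{(k+1)} = \bigl(I - p\,A(S+F_k)^{-1}(H+G_k)^{-1}\bigr)\,r^{(k)} - p\,A(S+F_k)^{-1}(H+G_k)^{-1}\Delta\hat{r}^{(k)} - A\Delta\hat{x}^{(k)}.
\end{align*}
Next, applying Lemma~\ref{lem2} and its direct analogue for $H+G_k$ yields $(S+F_k)^{-1} = S^{-1}(I+P_k)$ and $(H+G_k)^{-1} = H^{-1}(I+Q_k)$ with $\|P_k\|,\|Q_k\|$ of order $u_r$. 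Since $HS = \alpha^{2} I + \alpha A + MN$, one has $HS - pA = \alpha^{2} I + MN - (1-\omega)\alpha A$, so the noise-free leading operator satisfies $I - pA(HS)^{-1} = A\,T(\alpha,\omega)\,A^{-1}$, a similarity transform of the iteration matrix from Theorem~\ref{thm1}, whose spectrum is therefore strictly inside the unit disc. Expanding $(I+P_k)(I+Q_k)$ around this leading term, collecting the remaining first- and second-order pieces into a composite $\eta := \|P_k\|+\|Q_k\|+\|P_k\|\|Q_k\|$, and reusing the inequality $\alpha\|H^{-1}\|\|S^{-1}\|\|A\| \leq \hat{\kappa}(HS)$ together with the standing bounds on $\|\Delta\hat{r}^{(k)}\|$ and $\|\Delta\hat{x}^{(k)}\|$ should produce an absolute estimate of the shape $\|r^{(k+1)}\| \leq (\lambda + (2-\omega)\eta\hat{\kappa}(HS) + O(u_r) + O(u_f))\|r^{(k)}\| + \|A\|\,\Xi_k$, where $\Xi_k$ aggregates the $\hat{x}$-dependent contributions.

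The last step is to divide by $\|A\|\|\hat{x}^{(k+1)}\|$. This introduces the bounded ratio $\gamma \approx \|\hat{x}^{(k)}\|/\|\hat{x}^{(k+1)}\|$ as a uniform prefactor on every coefficient, giving the shape $\alpha_B = \gamma(\lambda + (2-\omega)\eta\hat{\kappa}(HS) + \cdots)$ claimed in the theorem; the $\varphi_2(n)(1-\varphi_2(n)u)^{-1}u$ contribution in $\beta_B$ then falls out of the elementary bound $(1-\varphi_2(n)u)\|\hat{x}^{(k+1)}\| \leq \|\hat{x}^{(k)}\|+\|\hat{y}^{(k)}\|$ applied to the $A\Delta\hat{x}^{(k)}$ term, while the $8(2-\omega)\gamma\hat{\kappa}(HS)(1+u_r)u_f$ piece comes from estimating $pA(S+F_k)^{-1}(H+G_k)^{-1}\Delta\hat{r}^{(k)}$ exactly as $\|\hat{y}^{(k)}\|$ is estimated in the proof of Theorem~\ref{thm2}.

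The main obstacle I anticipate is the noncommutativity of $A$ with $H$ and $S$: the leading operator is the conjugate $A T(\alpha,\omega) A^{-1}$ rather than $T(\alpha,\omega)$ itself, so the spectral-radius bound $\rho(T(\alpha,\omega))<1$ does not directly yield an operator-norm contraction. A clean workaround, which I believe is the path taken here, is to write $r^{(k+1)} = A(x-\hat{x}^{(k+1)})$ at the outset, import the $\alpha_F$-contraction of Theorem~\ref{thm2} for the forward error, and then pass back via $\|x-\hat{x}^{(k)}\| \leq \|A^{-1}\|\|r^{(k)}\|$; the $\kappa(A)$ factor that naively appears is absorbed into the $\gamma \hat{\kappa}(HS)$ structure already present in both $\alpha_B$ and $\beta_B$.
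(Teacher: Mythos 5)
Your first two paragraphs are essentially the paper's proof: form $x-\hat{x}^{(k+1)}$ from steps 2--4, left-multiply by $A$ to get the residual recurrence, expand $(S+F_k)^{-1}(H+G_k)^{-1}=S^{-1}(I+P_k)H^{-1}(I+Q_k)$ via Lemma~\ref{lem2} and its analogue, collect $\eta=\|P_k\|+\|Q_k\|+\|P_k\|\|Q_k\|$, reuse $\alpha\|H^{-1}\|\|S^{-1}\|\|A\|\le\hat{\kappa}(HS)$ together with the bounds on $\Delta\hat{r}^{(k)}$, $\Delta\hat{x}^{(k)}$ and $\|b\|\le\|b-A\hat{x}^{(k)}\|+\|A\|\|\hat{x}^{(k)}\|$, and finally divide by $\|A\|\|\hat{x}^{(k+1)}\|$ using $\|\hat{x}^{(k)}\|\le\gamma\|\hat{x}^{(k+1)}\|$. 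Where you diverge is your last paragraph: the paper does \emph{not} route through the forward error of Theorem~\ref{thm2}; it simply writes the leading term as $T(\alpha,\omega)(b-A\hat{x}^{(k)})$ and bounds it by $\lambda\|b-A\hat{x}^{(k)}\|$, i.e.\ it glosses over exactly the conjugation issue ($I-pA(HS)^{-1}=A\,T(\alpha,\omega)A^{-1}$, not $T(\alpha,\omega)$) that you correctly flag -- so your observation exposes a looseness in the paper rather than describing its method. Be aware, though, that your proposed workaround as sketched does not recover the theorem's constants: passing through $\|x-\hat{x}^{(k)}\|\le\|A^{-1}\|\|r^{(k)}\|$ injects a genuine $\kappa(A)$ factor, and the claim that it is ``absorbed into $\gamma\hat{\kappa}(HS)$'' is unjustified, since neither $\alpha_B$ nor $\beta_B$ contains $\kappa(A)$; to land on the stated bound you must argue the leading term directly (as the paper does, with $\lambda$ understood as a norm bound for the residual iteration operator). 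Two smaller points: the auxiliary inequality you need is on $\|\hat{y}^{(k)}\|$, namely $(1-\varphi_2(n)u)\|\hat{y}^{(k)}\|\le\|\hat{x}^{(k+1)}\|+(1+\varphi_2(n)u)\|\hat{x}^{(k)}\|$ (obtained from $\hat{y}^{(k)}=\hat{x}^{(k+1)}-\hat{x}^{(k)}-\Delta\hat{x}^{(k)}$), not your bound on $\|\hat{x}^{(k+1)}\|$, which goes the wrong direction for upper-bounding the $A\Delta\hat{x}^{(k)}$ contribution after normalization; and the $8(2-\omega)\gamma\hat{\kappa}(HS)(1+u_r)u_f$ piece arises from the $\Delta\hat{r}^{(k)}$ bound combined with $\|b\|\le\|b-A\hat{x}^{(k)}\|+\|A\|\|\hat{x}^{(k)}\|$, exactly as you indicate.
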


\begin{proof}

Building upon equations (2.2) and (2.3), we can derive the following sequence of equations:
\begin{align*}
    &x-\hat{x}^{(k+1)}\\
    =&x-\hat{x}^{(k)}-\hat{y}^{(k)}-\Delta \hat{x}^{(k)}\\
    =&x-\hat{x}^{(k)}-(S+F_k)^{-1}p \hat{z}^{(k)}-\Delta \hat{x}^{(k)}\\
    =&x-\hat{x}^{(k)}-p(S+F_k)^{-1}(H+G_k)^{-1}\hat{r}^{(k)}-\Delta \hat{x}^{(k)}.
\end{align*}
Subsequently, by applying equation (2.5) for the residual computation 
and leveraging the key result from lemma \ref{lem2} 
regarding the inverse matrix structure, we can derive:
\begin{align*}
    &x-\hat{x}^{(k+1)}\\
    =&x-\hat{x}^{(k)}-pS^{-1}(I+P_k)H^{-1}(I+Q_k)\hat{r}^{(k)}-\Delta \hat{x}^{(k)}\\
    =&x-\hat{x}^{(k)}-\Delta \hat{x}^{(k)}\\
    &-pS^{-1}(I+P_k)H^{-1}(I+Q_k)(b-A\hat{x}^{(k)}+\Delta\hat{r}^{(k)}).
\end{align*}
To obtain an expression for the residual, 
we multiply both sides of the equation by the 
coefficient matrix $A$ on the left, which yields:
\begin{align*}
    &b-A\hat{x}^{(k+1)}\\
    =&b-A\hat{x}^{(k)}-A\Delta\hat{x}^{(k)}\\
    &-pAS^{-1}(I+P_k)H^{-1}(I+Q_k)(b-A\hat{x}^{(k)}+\Delta\hat{r}^{(k)})\\
    =&b-A\hat{x}^{(k)}-pAS^{-1}(I+P_k)H^{-1}(I+Q_k)(b-A\hat{x}^{(k)})\\
    &-pAS^{-1}(I+P_k)H^{-1}(I+Q_k)\Delta\hat{r}^{(k)}-A\Delta\hat{x}^{(k)}\\
    =&T(\alpha,\omega)(b-A\hat{x}^{(k)})-A\Delta \hat{x}^{(k)}\\
    &-S^{-1}P_kH^{-1}Ap(b-A\hat{x}^{(k)})-Q_kS^{-1}H^{-1}Ap(b-A\hat{x}^{(k)})\\
    &-Q_kS^{-1}P_kH^{-1}Ap(b-A\hat{x}^{(k)})\\
    &-A(I+Q_k)S^{-1}(I+P_k)H^{-1}p\Delta\hat{r}^{(k)}.
\end{align*}

Taking the norm of both sides and using the fact that $\|P_k\| < 1$ and 
letting $\|P_k\|+\|Q_k\|+\|P_k\|\|Q_k\|=\eta$ again gives:
\begin{align*}
    &\|b-A\hat{x}^{(k+1)}\|\\
    \leq&\|T(\alpha,\omega)(b-A\hat{x}^{(k)})\|\\
    &+p\|S^{-1}\|\|P_k\|\|H^{-1}\|\|A\|\|b-A\hat{x}^{(k)}\|\\
    &+p\|Q_k\|\|S^{-1}\|\|H^{-1}\|\|A\|\|b-A\hat{x}^{(k)}\|\\
    &+p\|Q_k\|\|S^{-1}\|\|P_k\|\|H^{-1}\|\|A\|\|b-A\hat{x}^{(k)}\|\\
    &+p\|A\|\|S^{-1}\|\|H^{-1}\|\|I+P_k\|\|I+Q_k\|\|\Delta\hat{r}^{(k)}\|\\
    &+\|A\|\|\Delta\hat{x}^{(k)}\|\\
    \leq&\|T(\alpha,\omega)(b-A\hat{x}^{(k)})\|+p\eta\|H^{-1}\|\|S^{-1}\|\|b-A\hat{x}^{(k)}\|\\
    &+4p\|H^{-1}\|\|S^{-1}\|\|A\|\|\Delta\hat{r}^{(k)}\|+
    \|A\|\|\Delta\hat{x}^{(k)}\|.
\end{align*}

Applying Equations (2.8) and (2.9) to further 
refine our analysis, we obtain the following expression:
\begin{equation}
    \begin{aligned}
        &\|b-A\hat{x}^{(k+1)}\|\\
        \leq&\|T(\alpha,\omega)(b-A\hat{x}^{(k)})\|+p\eta\|H^{-1}\|\|S^{-1}\|\|A\|\|b-A\hat{x}^{(k)}\|\\
            &+4p\|H^{-1}\|\|S^{-1}\|\|A\|(u_r\|b-A\hat{x}^{(k)}\|\\
            &+(1+u_r)u_f(\|A\|\|\hat{x}^{(k)}\|+\|b\|))\\
            &+\varphi_2(n)
            u\|A\|(\|\hat{x}^{(k)}\|+\|\hat{y}^{(k)}\|).
    \end{aligned}
\end{equation}
For $\|b\|$ according to $b=b-A\hat{x}^{(k)}+A\hat{x}^{(k)}$ we have:
\begin{equation}
    \|b\|\leq\|b-A\hat{x}^{(k)}\|+\|A\|\|\hat{x}^{(k)}\|.
\end{equation}

Then we need to calculate the $\|\hat{y}^{k}\|$ in equation (2.20), using equation (2.7) 
and (2.9):
\begin{align*}
    \|\hat{y}^{(k)}\|=&\|\hat{x}^{(k+1)}-\hat{x}^{(k)}-\Delta \hat{x}^{(k)}\|\notag\\
    \leq&\|\hat{x}^{(k+1)}\|+\|\hat{x}^{(k)}\|+\|\Delta \hat{x}^{(k)}\|\notag\\
    \leq&\|\hat{x}^{(k+1)}\|+\|\hat{x}^{(k)}\|+\varphi_2(n)u\|\hat{x}^{(k)}\|
    +\varphi_2(n)u\|\hat{y}^{(k)}\|\notag
\end{align*}
then:
\begin{align}
    \|\hat{y}^{(k)}\|\leq(1-\varphi_2(n)u)^{-1}(\|\hat{x}^{(k+1)}\|+(1+\varphi_2(n)u)\|\hat{x}^{(k)}\|).
\end{align}

Finally, injecting equations (2.11), (2.21) and (2.22) in equation (2.20) yields:
\begin{align}
    \|b-Ax_{k+1}\|\leq&\lambda\|b-Ax_{k}\|+(2-\omega)\eta\hat{\kappa}(HS)\|b-A\hat{x}^{(k)}\|\notag\\
    &+4(2-\omega)\hat{\kappa}(HS)u_r\|b-A\hat{x}^{(k)}\|\notag\\
    &+4(2-\omega)\hat{\kappa}(HS)(1+u_r)u_f\|A\|\|\hat{x}^{(k)}\|\notag\\
    &+4(2-\omega)\hat{\kappa}(HS)(1+u_r)u_f(\|b-A\hat{x}^{(k)}\|+\|A\|\|\hat{x}^{(k)}\|)\notag\\
    &+\varphi_2(n)u\|A\|\|\hat{x}^{(k)}\|\notag\\
    &+\varphi_2(n)(1-\varphi_2(n)u)^{-1}u\|A\|(\|\hat{x}^{(k+1)}\|+(1+\varphi_2(n)u)\|\hat{x}^{(k)}\|).\notag
\end{align}
From forward analysis note that there exists $\gamma$ 
so that $\|\hat{x}^{(k)}\|\leq\gamma\|\hat{x}^{(k+1)}\|$, then:
\begin{align*}
    &\frac{\|b-Ax_{k+1}\|}{\|A\|\|x_{k+1}\|}\\
    \leq&\gamma(\lambda+(2-\omega)\eta\hat{\kappa}(HS)+
    4(2-\omega)\hat{\kappa}(HS)u_r+4(2-\omega)\hat{\kappa}(HS)(1+u_r)u_f)\frac{\|b-Ax_{k}\|}{\|A\|\|x_{k}\|}\notag\\
    &+8(2-\omega)\gamma\hat{\kappa}(HS)(1+u_r)u_f+\varphi_2(n)\gamma u\notag\\
    &+\varphi_2(n)(1-\varphi_2(n)u)^{-1}u(1+(1+\varphi_2(n)u)\gamma)\notag\\
    =&\alpha_B\frac{\|b-Ax_{k}\|}{\|A\|\|x_{k}\|}+\beta_B,\notag
\end{align*}

where:
\begin{align*}
    \alpha_B=&\gamma(\lambda+(2-\omega)\eta\hat{\kappa}(HS)+
    4(2-\omega)\hat{\kappa}(HS)u_r+4(2-\omega)\hat{\kappa}(HS)(1+u_r)u_f),\\
    \beta_B=&8(2-\omega)\gamma\hat{\kappa}(HS)(1+u_r)u_f+\varphi_2(n)\gamma u\notag\\
    &+\varphi_2(n)(1-\varphi_2(n)u)^{-1}u(1+(1+\varphi_2(n)u)\gamma).\notag
\end{align*}
\end{proof}

It can be seen that 
the term $\alpha_B$ is the rate of convergence and depends 
on the condition number of the
matrix $H,S$ and parameter $\alpha$, and the precision used $u_r$. The term $\beta_B$ is the limiting accuracy
of the method and depends on the precision accuracy used $u$.

\section{Parameter prediction and regularization}
\label{sec:pr}
\subsection{Regularization}
\label{subsec:reg}
The regularization parameter $\alpha$ plays a crucial role in determining the
performance and stability of GADI. In this section, we analyze how to optimally select
the regularization parameter $\alpha$ to effectively balance the splitting matrices
and ensure robust convergence in the inner low-precision steps of GADI-IR. We focus
particularly on its impact on numerical stability and convergence behavior when
operating in mixed-precision environments.

In Algorithm \ref{alg1}, inner loop step 2 and step 3 are performed 
with coefficient matrix $H=\alpha I +H, S=\alpha I+S$ which are of the form:
\begin{align}
    \alpha I + U,
\end{align}
with regularization parameter $\alpha$. 

For the regularized matrix in (3.1), we can explicitly compute its condition number. The 2-norm condition number of matrix $\alpha I + U$ is given by:
\begin{equation}
    \begin{aligned}
        \kappa_2(\alpha I + U) &= \frac{\sigma_{\max}(\alpha I + U)}{\sigma_{\min}(\alpha I + U)}\\
        &= \frac{\alpha + \sigma_{\max}(U)}{\alpha + \sigma_{\min}(U)},
    \end{aligned}
\end{equation}
where $\sigma_{\max}$ and $\sigma_{\min}$ denote the 
largest and smallest singular values respectively. 
This expression reveals an important property: 
as the regularization parameter $\alpha$ increases, 
the ratio between the maximum and minimum singular 
values decreases, thereby improving the condition 
number of the regularized matrix $\alpha I + U$.

Then, we can analyse the $\hat{\kappa}_2(HS)$, for $\hat{\kappa}_2(HS)$ we have:
\begin{equation}
    \begin{aligned}
        \hat{\kappa}_2(HS)
        =&\kappa_2(H)\kappa_2(S)\frac{\alpha\|H\|_2+\alpha\|S\|_2+2\alpha^2}{\|H\|_2\|S\|_2}\\
        =&\kappa_2(H)\kappa_2(S)\frac{\alpha\|\alpha I+N\|_2+
        \alpha\|\alpha I+M\|_2+2\alpha^2}{\|\alpha I+M\|_2\|\alpha I+N\|_2}\\
        =&\frac{\alpha(\alpha+\sigma_{\max}(M)+\alpha+\sigma_{\max}(N))+\alpha^2}
        {(\alpha+\sigma_{\min}(M))(\alpha+\sigma_{\min}(N))}\\
        =&\frac{4\alpha^2+\alpha(\sigma_{\max}(M)+\sigma_{\max}(N))}
        {\alpha^2+\alpha(\sigma_{\min}(M)+\sigma_{\min}(N))+\sigma_{\min}(M)\sigma_{\min}(N)}.
    \end{aligned}
\end{equation}
It is obvious from (3.3) that $\hat{\kappa}_2(HS)$ is a monotonically decreasing function 
with respect to $\alpha$.
Also, it is clearly from (3.3) that:
\begin{align*}
    \lim_{\alpha \to \infty} \hat{\kappa}_2(HS) = 4.
\end{align*}
This sensitivity to $\alpha$ becomes particularly 
pronounced in mixed-precision environments, where 
reduced precision operations during iteration can 
amplify small errors, especially if the regularization 
term is not well-calibrated. Therefore, determining 
an optimal value for $\alpha$ is crucial for 
maintaining the robustness of the mixed-precision 
GADI-IR algorithm, as it ensures that the computational 
efficiency gains are achieved without compromising 
solution accuracy or convergence reliability.

\subsection{Backward analysis for regularization}
\label{subsec:breg}
In this section, 2-Norm will be used as the symbolic norm to 
satisfy (3.2).
Theorem \ref{thm3} provides the backward error analysis of GADI-IR, where 
$\alpha_B$ determines the convergence rate and $\beta_B$ characterizes 
the ultimate achievable accuracy of the method.

According to \cite{doi:10.1137/23m1557209}, 
for a given matrix $U$, reducing 
its precision can lead to an improvement in 
its condition number. 
When down-casting a matrix $U$ 
(e.g., from double precision to single precision), 
its smallest singular value increases  
while the largest singular value remains largely unchanged 
which can be expressed mathematically as:
\begin{align}
    \kappa_2(U_r)\leq \kappa_2(U),
\end{align}
where $U_r$ denotes the reduced-precision 
representation of matrix $U$ stored with precision $u_r$. 
However, this improvement is not so significant.

For the full precision GADI algorithm where all computational 
steps are performed in high precision, 
the convergence analysis has been established in \cite{doi:10.1137/21M1450197}. 
Specifically, for $\alpha_B$ in Theorem \ref{thm3}, we have:
\begin{align}
    \hat{\alpha}_B=&\gamma(\lambda+(2-\omega)\eta\hat{\kappa}_2(HS)+
    4(2-\omega)\hat{\kappa}_2(HS)u+4(2-\omega)\hat{\kappa}_2(HS)(1+u)u)\notag\\
    =&\gamma\lambda+(2-\omega)\hat{\kappa}_2(HS)(\|P_k\|+\|Q_k\|+\|P_k\|\|Q_k\|+4u+4(1+u)u),\notag
\end{align}
where we consider the case of GADI with uniform precision $u=u_f=u_r$.

For low-precision computations in GADI-IR, 
the coefficient $\alpha_B$ from Theorem \ref{thm3} can be expressed as:
\begin{align}
    \alpha_B=&\gamma(\lambda+(2-\omega)\eta\hat{\kappa}_2(H_rS_r)+
    4(2-\omega)\hat{\kappa}_2(H_rS_r)u_r+4(2-\omega)\hat{\kappa}_2(H_rS_r)(1+u_r)u_f)\notag\\
    =&\gamma\lambda+(2-\omega)\hat{\kappa}_2(H_rS_r)(\|P_k\|+\|Q_k\|+\|P_k\|\|Q_k\|+4u_r+4(1+u_r)u_f),
\end{align}
where $H_r$ and $S_r$ represent 
the reduced-precision versions of matrices $H$ and $S$ 
respectively, both stored with precision $u_r$, 
satisfying $u\leq u_f\leq u_r$. Based on equation (3.5) and Lemma
\ref{lem2}, we can establish:
\begin{align}
    \alpha_B=f(\alpha,u_r,u_f),
\end{align}
where $f(\alpha,u_r,u_f)$ is a function dependent on $\alpha$, $u_r$, and $u_f$. Based on (3.3) 
and (3.5) and considering that the improvement in (3.4) is not so significant, then 
$f(\alpha,u_r,u_f)$ exhibits monotonic behavior - decreasing with respect to 
$\alpha$ while increasing with respect to both $u_r$ and $u_f$. If we fix $\alpha$ and 
temporarily disregard precision's influence on the matrix condition number because
the improvement in (3.4) is not so significant by setting 
$\hat{\kappa}_2(HS)=\hat{\kappa}_2(H_rS_r)$, we obtain:
$$
\alpha_B=f(\alpha,u_r,u_f)\geq \hat{\alpha}_B=f(\alpha,u,u)\leq1,
$$
for $u_r\geq u, u_f\geq u$.

Consequently, $\alpha_B$ in GADI-IR could potentially exceed 1, 
leading to algorithmic divergence. Therefore, 
to ensure convergence of GADI-IR when $u\leq u_f\leq u_r$, 
it is essential to maintain $\alpha_B<1$, which according to (3.3) 
necessitates a larger value of $\alpha$.

In summary, while using lower precision for matrix $H$ can improve its condition number 
and potentially enhance the convergence rate of GADI-IR, this reduction in precision 
introduces larger values of $u_r$ and $u_f$ in equation (3.5), which may result in 
an increased $\alpha_B>1$. For GADI-IR to converge, it is crucial that $\alpha_B$ 
remains less than 1. However, the larger $\alpha_B$  
resulting from lower precision could cause $\alpha_B$ to exceed 1, leading to 
algorithmic divergence. Therefore, it becomes essential to employ the regularization 
parameter $\alpha$ to achieve an even smaller $\hat{\kappa}(H_rS_r)$, thereby 
ensuring $\alpha_B$ stays below 1 and maintaining convergence.

\subsection{Parameter prediction}
Paper\cite{doi:10.1137/21M1450197} shows that the parameter $\alpha$ is important to the 
performance of GADI. In this section, we will use mixed precision to 
accelerate the parameter prediction of GADI-IR.
\subsubsection{Parameter prediction in GADI}
The performance of GADI is sensitive to the splitting parameters. 
Paper\cite{doi:10.1137/21M1450197} proposed a data-driven parameter selection method, 
the Gaussian Process Regression (GPR) approach based on the Bayesian inference, which can 
efficiently obtain accurate splitting parameters. 
The Gaussian Process Regression (GPR) prediction process is illustrated in Figure \ref{gpr}

From Figure \ref{gpr}, it can be seen that the Gaussian Process Regression (GPR)
method established a mapping between the matrix size $n$ 
and the parameter $\alpha$. By a series known data of parameters, 
we can predict unknown parameters. The known relatively optimal parameters 
in the training data set come 
from small-scale linear systems, while the unknown parameter belongs to that of large 
linear systems. The predicted data in the training set is also used to form 
the retraining set to predict the parameter more accurately and extensively.
\subsubsection{Parameter prediction training set}
As we use Gaussian Process Regression (GPR) prediction to predict the parameter $\alpha$, we need to 
first get the training set. To get the training set, it is necessary for us 
to analysis the structure of linear system automatically to construct a 
series of small linear systems with the same structure which will take a lot of time.
To reduce the time consumption, we put this progress in FP32 low precision. Then we 
will get a series of small linear systems with the same structure of the original 
linear system. Afterwards we use the dichotomy to find a 
series of $\{\alpha_k\}$ with these low scale linear systems in FP32 precision.
\begin{figure}[!t]
    \centerline{\includegraphics[width=\columnwidth]{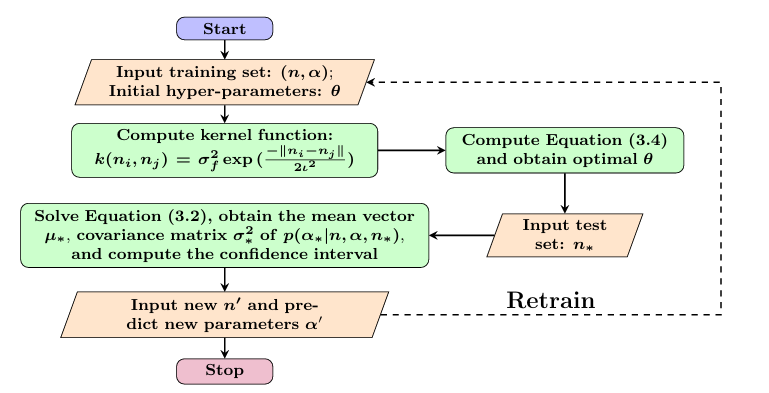}}
    \caption{\textit{Flow chart of Gaussian Process Regression (GPR) parameters prediction.}}
    \label{gpr}
\end{figure}

Finally, we will use the calculated training set $\{\alpha_k\}$ to do 
Gaussian Process Regression (GPR) prediction with FP32 precision. 

By using low precision to find the training set, we can get 
the best parameter $\alpha$ with less time of the original 
implementation without losing performance in GADI-IR with parameter prediction 
which is illustrated in Table \ref{pred32}. The result of Table \ref{pred32} 
is tested on experiment three-dimensional convection-diffusion equation in 
section~\ref{subsec:3de}.

\begin{table}[htbp]
    \centering
    \begin{tabular}{|c|c|c|}
        \hline
        \textbf{$n^3$} & \textbf{$\alpha$ FP64} & \textbf{$\alpha$ FP32} \\
        \hline
        $32^3$  & 0.0699 & 0.0699 \\
        $64^3$ & 0.0599 & 0.0599 \\
        $128^3$ & 0.0595 & 0.0595 \\
        \hline
    \end{tabular}
    \caption{\textit{$\alpha$ predicted by different precision training set for 
    three-dimensional convection-diffusion equation.}}
    \label{pred32}
\end{table}


\section{Numerical Experiments}
\label{sec:ne}
In this section, we will evaluate our algorithm using 
various mixed precision configurations to validate our analysis.

\subsection{Three-dimensional convection-diffusion equation}
\label{subsec:3de}
Consider 3D convection-diffusion equation:
\begin{equation}
    -(u_{x_1x_1}+u_{x_2x_2}+u_{x_3x_3})+
    (u_{x_1}+u_{x_2}+u_{x_3})=f(x_1,x_2,x_3)
\end{equation}
on the unit cube $\Omega =[0,1]\times[0,1]\times[0,1]$ with Dirichlet boundary condition. 
By using the centered difference method to discretize the convective-diffusion equation, we 
can obtain the linear sparse system $Ax=b$. The coefficient matrix $A$ is :
$$
A=T_{x} \otimes I \otimes I+I \otimes T_{y} \otimes I+I \otimes I \otimes T_{z}, 
$$
where $T_{x}, T_{y}, T_{z}$ are the tridiagonal matrices. $T_{x}=\mathrm{Tridiag}(t_2,t_1,t_3),
T_{y}=T_{z}=\mathrm{Tridiag}(t_2,0,t_3),t_1=6,t_2=-1-r,t_3=-1+r,r=1/(2n+2)$. $n$ is 
the degree of freedom in each direction. $x \, \in\, \mathbb{R}^{n^{3}} $ is 
the solution vector and $b \, \in\, \mathbb{R}^{n^{3}} $ is the right-hand side vector which 
is generated by choosing the exact solution $x=(1,1,...,1)^T$. The relative error  
is defined as $\mathrm{RES}=\| r^{( k )} \|_{2} / \| r^{( 0 )} \|_{2} $.
All tests 
are started from the zero vector. $r^{( k )}=b-A x^{( k )} $ is the $k$-th step residual.

The GADI-IR algorithm is tested on the 3D convection-diffusion equation with 
\begin{equation}
    H=\frac{A+A^*}{2}, S=\frac{A-A^*}{2}.
\end{equation}
\cite{articleSuccessive-Overrelaxation}
splitting strategy. The results of the numerical experiments are shown in Table \ref{3d}. 
The table presents the relative residuals (RRES) for the 3D 
convection-diffusion equation using different combinations 
of precisions for the components $u_r$, $u$, and $u_f$. 
The experiments demonstrate that using double precision 
for all components consistently achieves the lowest residuals. 
In contrast, using half precision for $u_r$ results in 
higher residuals. 
This outcome aligns with our error analysis of GADI-IR in 
section~\ref{sec:ea}, 
which predicts the convergence of GADI-IR and 
increased sensitivity and potential 
instability when lower precision is employed for critical computations 
particularly when $\alpha$ is set to lower values.
The results highlight the sensitivity of the algorithm's performance 
to the choice of precision and the regularization parameter $\alpha$.

\begin{figure}[H]
    \centering
    \begin{minipage}[t]{0.45\textwidth} 
        \centering
        \begin{tabular}{ccccc}
        \toprule
        \cmidrule(lr){4-4}
        $u_r$ & $u$ & $u_f$ & $\alpha$ & RRES \\
        \midrule
        double  & double & double & $0.01$ &  $10^{-13}$\\
        single  & single & single & $0.01$ & $10^{-4}$\\
        single  & double & double & $0.01$ & $10^{-13}$\\
        half  & double & single & $0.01$ &  $-$\\
        half  & double & double & $0.01$ &  $-$\\
        half  & single & single   & $0.01$ & $10^{-4}$\\
        \midrule
        double  & double & double & $0.02$ &  $10^{-13}$\\
        single  & single & single & $0.02$ & $10^{-4}$\\
        single  & double & double & $0.02$ & $10^{-13}$\\
        half  & double & single & $0.02$ &  $10^{-8}$\\
        half  & double & double & $0.02$ &  $10^{-8}$\\
        half  & single & single   & $0.02$ & $10^{-4}$\\
        \midrule
        double  & double & double & $10.0$ &  $10^{-13}$\\
        single  & single & single & $10.0$ & $10^{-4}$\\
        single  & double & double & $10.0$ & $10^{-13}$\\
        half  & double & single & $10.0$ &  $10^{-10}$\\
        half  & double & double & $10.0$ &  $10^{-10}$\\
        half  & single & single   & $10.0$ & $10^{-4}$\\
        \bottomrule
        \end{tabular}
        \captionof{table}{\textit{Relative Residual with different precisions 
        for 3D convection-diffusion equation.}}
        \label{3d}
    \end{minipage}%
    \hfill 
    \begin{minipage}[t]{0.45\textwidth} 
        \centering
        \begin{tabular}{ccccc}
        \toprule
        $u_r$ & $u$ & $u_f$ & $\alpha$ & RRES \\
        \midrule
        double  & double & double & $0.01$ &  $10^{-9}$\\
        single  & single & single & $0.01$ & $10^{-2}$\\
        single  & double & double & $0.01$ & $10^{-9}$\\
        half  & double & single & $0.01$ &  $-$\\
        half  & double & double & $0.01$ &  $-$\\
        half  & single & single   & $0.01$ & $10^{-3}$\\
        \midrule
        double  & double & double & $0.02$ &  $10^{-9}$\\
        single  & single & single & $0.02$ & $10^{-2}$\\
        single  & double & double & $0.02$ & $10^{-9}$\\
        half  & double & single & $0.02$ &  $10^{-5}$\\
        half  & double & double & $0.02$ &  $10^{-5}$\\
        half  & single & single   & $0.02$ & $10^{-3}$\\
        \midrule
        double  & double & double & $10.0$ &  $10^{-9}$\\
        single  & single & single & $10.0$ & $10^{-2}$\\
        single  & double & double & $10.0$ & $10^{-9}$\\
        half  & double & single & $10.0$ &  $10^{-6}$\\
        half  & double & double & $10.0$ &  $10^{-6}$\\
        half  & single & single   & $10.0$ & $10^{-3}$\\
        \bottomrule
        \end{tabular}
        \captionof{table}{\textit{Relative Residual with different precisions 
        for CARE.}}
        \label{care}
    \end{minipage}
\end{figure}



From Table \ref{3d}, It can be seen that the mixed precision algorithm GADI-IR 
with $u_r=\mathrm{half},u=u_f=\mathrm{double}$ exhibits significant 
sensitivity to the regularization parameter 
$\alpha$. This mixed precision strategy, where the 
reduced precision (FP16) is used for the iterative 
process and the higher precision (double) 
is retained for key updates and parameter calculations, 
provides considerable performance benefits in 
terms of computation speed and memory usage. However, the choice of 
$\alpha$, which controls the balance between 
the regularization and the solution accuracy, 
plays a crucial role in ensuring the stability 
and convergence of the algorithm.

Based on this mixed precision strategy 
$(u_r=\mathrm{half},u=u_f=\mathrm{double})$, we 
conducted experiments to investigate the impact of varying 
$\alpha$ on the convergence residual 
RES. The results, as shown in the Figure \ref{fig:alpha-impact} and 
\ref{fig:care-alpha-impact}, 
illustrate a clear trend: as the regularization parameter 
$\alpha$ increases, the convergence performance 
improves for this specific test case.


This behavior indicates that larger values of 
$\alpha$ enhance the stability of the iterative process, 
reducing the impact of precision-related errors 
and promoting more reliable convergence to the desired solution. 
The experimental results suggest that 
$\alpha$ effectively mitigates the potential 
inaccuracies introduced by the mixed precision approach just as
the theoretical analysis of regularization in section~\ref{subsec:reg} and
~\ref{subsec:breg} predicted. 

While increasing the regularization parameter 
$\alpha$ generally improves the convergence residual 
RES, it may also lead to a higher number of 
iterations required for convergence. This is because larger values of 
$\alpha$ can over-regularize the system, 
effectively damping the iterative process and 
slowing down the overall rate of convergence. 
Table \ref{tab:alpha-effects} shows how different values of the 
regularization parameter 
$\alpha$ influence the residual 
RES and the number of iteration steps. Smaller or larger 
$\alpha$ values lead to a significant increase 
in iteration steps, while moderate 
$\alpha$ values result in fewer steps and smaller residuals. 
This indicates that the choice of 
$\alpha$ is crucial for balancing efficiency and accuracy.
\begin{table}[htbp]
    \centering
    \sisetup{scientific-notation=true} 
    \begin{tabular}{c c c}
        \toprule
        \textbf{Alpha ($\alpha$)} & \textbf{Residual (res)} & \textbf{Iteration Steps} \\
        \midrule
        0.01 & $-$ & $-$ \\
        0.02 & 4.86e-08 & 2126 \\
        0.05 & 3.10e-08 & 465  \\
        0.1  & 1.88e-08 & 150  \\
        0.5  & 4.85e-09 & 75   \\
        1    & 2.47e-09 & 120  \\
        5    & 4.96e-10 & 468  \\
        10   & 3.51e-10 & 909  \\
        100  & 3.51e-11 & 8862 \\
        \bottomrule
    \end{tabular}
    \caption{\textit{Impact of Regularization Parameter 
    $\alpha$ on Convergence Residuals and Iteration Steps.}}
    \label{tab:alpha-effects}
\end{table}

As a result, selecting the optimal 
$\alpha$ involves balancing the 
trade-off between minimizing the residual 
error and controlling the computational cost 
associated with additional iterations. 
For practical applications, it is essential to choose 
$\alpha$ based on the specific problem 
characteristics and acceptable computational overhead.

By carefully tuning 
$\alpha$ within a reasonable range, 
one can achieve a compromise that maintains 
a low residual error while avoiding excessive 
iteration counts, thereby optimizing both accuracy 
and efficiency under the constraints of the mixed 
precision strategy. Adaptive or problem-specific 
strategies for determining 
$\alpha$ could further enhance the robustness and 
practicality of the algorithm in diverse scenarios.

\begin{figure}
    \centering
    \begin{minipage}[t]{0.45\textwidth}
        \centering
        \includegraphics[width=0.8\textwidth]{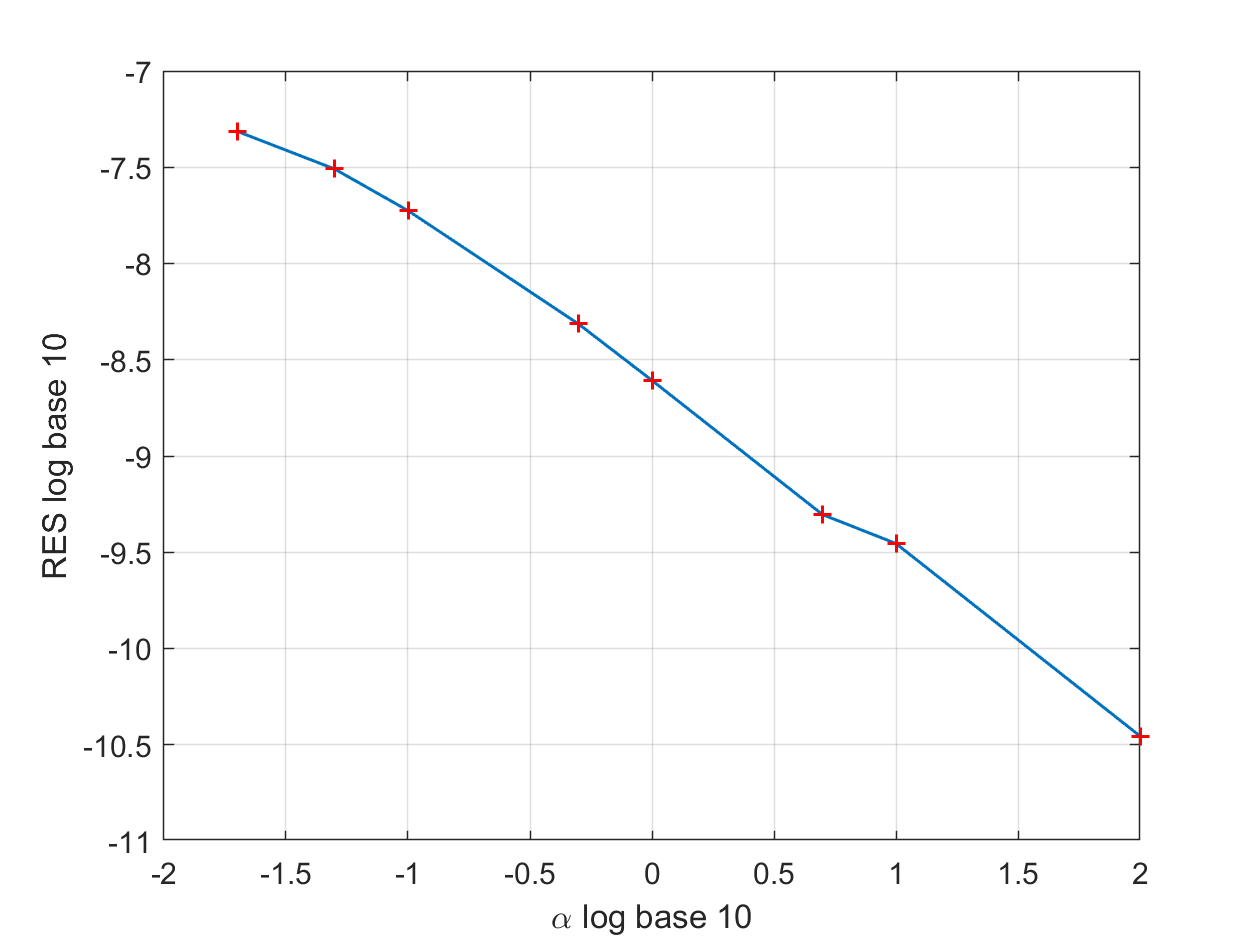} 
        \caption{\textit{Impact of $\alpha$ on convergence residual of 3d convection-diffusion equation.}} 
        \label{fig:alpha-impact} 
    \end{minipage}
    \hfill 
    \begin{minipage}[t]{0.45\textwidth}
        \centering
        \includegraphics[width=0.8\textwidth]{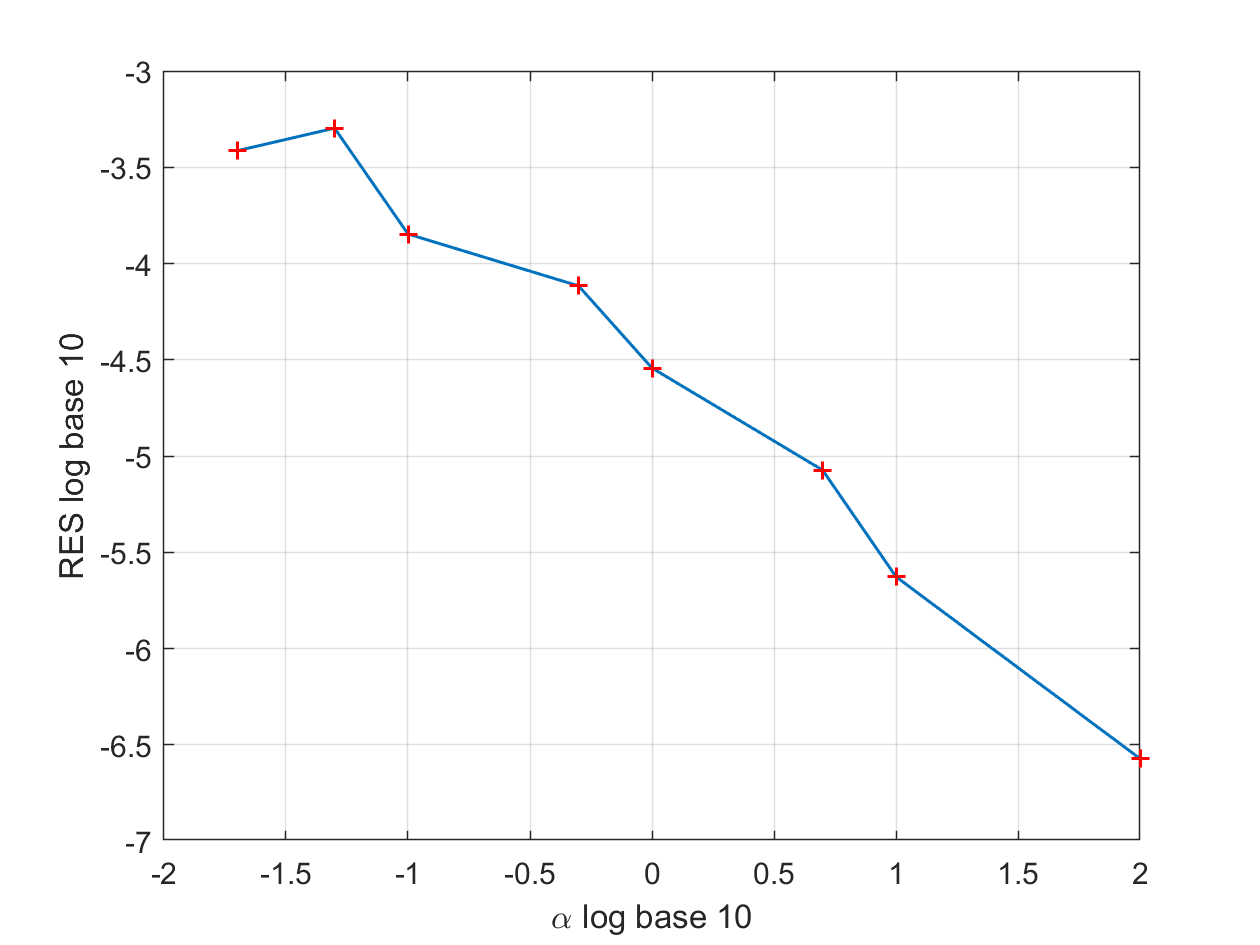} 
        \caption{\textit{Impact of $\alpha$ on convergence residual of CARE.} 
        \label{fig:care-alpha-impact}} 
    \end{minipage}
\end{figure}

\subsection{CARE equation}

Next, we apply our algorithm to other problem. We test our algorithm on 
CARE equation which is a typical problem in control theory. In this section, 
we consider the continuous-time algebraic Riccati equation(CARE): 
\begin{align}
    A^TX+XA-XKX+Q=0.
\end{align}
where $A, \, K, \, Q \in\mathbb{C}^{n \times n}, \, K=K^{*}, \, Q=Q^{*}, $ and 
$X$ is an unknown matrix. Paper \cite{JIANG2025642} proposed an algorithm named 
Newton-GDAI which is based on GADI to solve this equation. In this algorithm, 
GADI is used to solve Lyapunov equation in inner iteration where we apply our 
mixed precision algorithm GADI-IR. 
In this equation, complex matrices are used where 
we follow the work of \cite{8948697} to handle 
complex arithmetic in half precision. 
Their research provides effective strategies 
for mixed-precision computation with complex matrices on GPUs, 
which is essential for our implementation.
The results of the numerical experiments, 
as shown in Table \ref{care}, 
indicate that the mixed precision 
algorithm GADI-IR demonstrates varying 
levels of residual error (RRES) depending 
on the precision levels used for different 
components ($u_r$, $u$, $u_f$) and 
the regularization parameter $\alpha$. 
Specifically, using double precision for 
all components consistently achieves the 
lowest residual errors across different values 
of $\alpha$. In contrast, using half precision 
for $u_r$ results in higher residual errors, 
particularly when $\alpha$ is set to lower values. 
The experiments highlight the sensitivity of the algorithm's 
performance to the choice of precision and the 
regularization parameter, emphasizing the need for 
careful selection to balance computational efficiency and accuracy.


Similarly, the impact of the regularization parameter $\alpha$ on the convergence residual 
RES for the mixed precision strategy with $u=u_f=double$ and $u_r=single$ in the CARE problem 
is illustrated in Figure \ref{fig:care-alpha-impact}, akin to the analysis for the 
3D convection-diffusion equation discussed in section~\ref{subsec:3de}.

\subsection{Sylvester equation}

To further test the performance of our algorithm, we apply mixed GADI-IR method to continuous 
Sylvester equation\cite{BENNER20091035}. The continuous Sylvester equation can be written as:
\begin{align}
    AX+XB+C=0.
\end{align}
where $A \, \in\, \mathbb{C}^{m \times m}, B \, \in\, \mathbb{C}^{n \times n} \, \mathrm{~ a n d ~} \, C \, \in\, \mathbb{C}^{m \times n} $, are 
sparse matrices. $X \, \in\, \mathbb{C}^{m \times n} $ is the unknown matrix. 
Applying the  mixed precision algorithm GADI-IR to continuous Sylvester equation and 
replacing splitting matrices $M,N$ with $A,B$ respectively, we can obtain the 
mixed GADI-AB method.

The sparse matrices $A, B$ have the following structure:
$$
A=B=M+2rN+\frac{100}{(n+1)^2}I.
$$
where $r$ is a parameter which controls Hermitian dominated or skew-Hermitian dominated of 
matrix. $M, \, N \in\mathbb{C}^{n \times n} $ are tridiagonal matrices 
$M=\mathrm{T r i d i a g} (-1, 2,-1 ), N \,=\, \mathrm{T r i d i a g} ( 0. 5, 0,-0. 5 ) $. 
We apply mixed GADI-AB to solve the Sylvester equations for $r=0.01,0.1,1$, RES is calculated 
as $R^{( k )}=C-A X^{( k )}-X^{( k )} B $.

The numerical experiments are shown in Table~\ref{tab:sylvester-results} which presents 
numerical test results 
for the Sylvester equation under different parameter combinations of 
$r$ and 
$\alpha$, with solution accuracy evaluated by the residual (res). 
The results show that the residuals range from 
1e-8 
to 
1e-10
, indicating high numerical accuracy of the solutions. 
The method demonstrates stability and reliability 
across various parameter settings.

Additionally, the impact of the regularization parameter 
$\alpha$ on the convergence residual of the Sylvester equation 
with different $r$ values 
is illustrated in Figure~\ref{fig:sylvester-alpha-impact}. 
It can be observed that the convergence residual decreases 
as $\alpha$ increases, with the rate of convergence 
improving for larger $\alpha$ values. This behavior 
is consistent with the results of the previous tests,
indicating that the regularization parameter 
$\alpha$ plays a crucial role in balancing the 
precision-related errors and promoting the 
convergence of the mixed precision GADI-IR algorithm.

\begin{table}[htbp]
    \centering
    \sisetup{scientific-notation=true}
    \begin{tabular}{c c c}
        \toprule
        \textbf{$r$} & \textbf{$\alpha$} & \textbf{Residual (res)} \\
        \midrule
        0.01 & 0.01 & $-$ \\
            & 0.02 & 1.563e-08 \\
             & 10   & 2.4092e-10 \\
        0.1  & 0.01 & $-$ \\
            & 0.02 & 1.563e-08 \\
             & 10   & 4.2351e-10 \\
        1    & 0.01 & $-$ \\
            & 0.02 & 1.563e-08 \\
             & 10   & 3.2551e-10 \\
        \bottomrule
    \end{tabular}
    \caption{\textit{Test results of the Sylvester equation for different $r$ and $\alpha$ values.}}
    \label{tab:sylvester-results}
\end{table}

\begin{figure}[htbp]
    \centering
    \begin{minipage}[t]{0.3\textwidth}
        \centering
        \includegraphics[width=\textwidth]{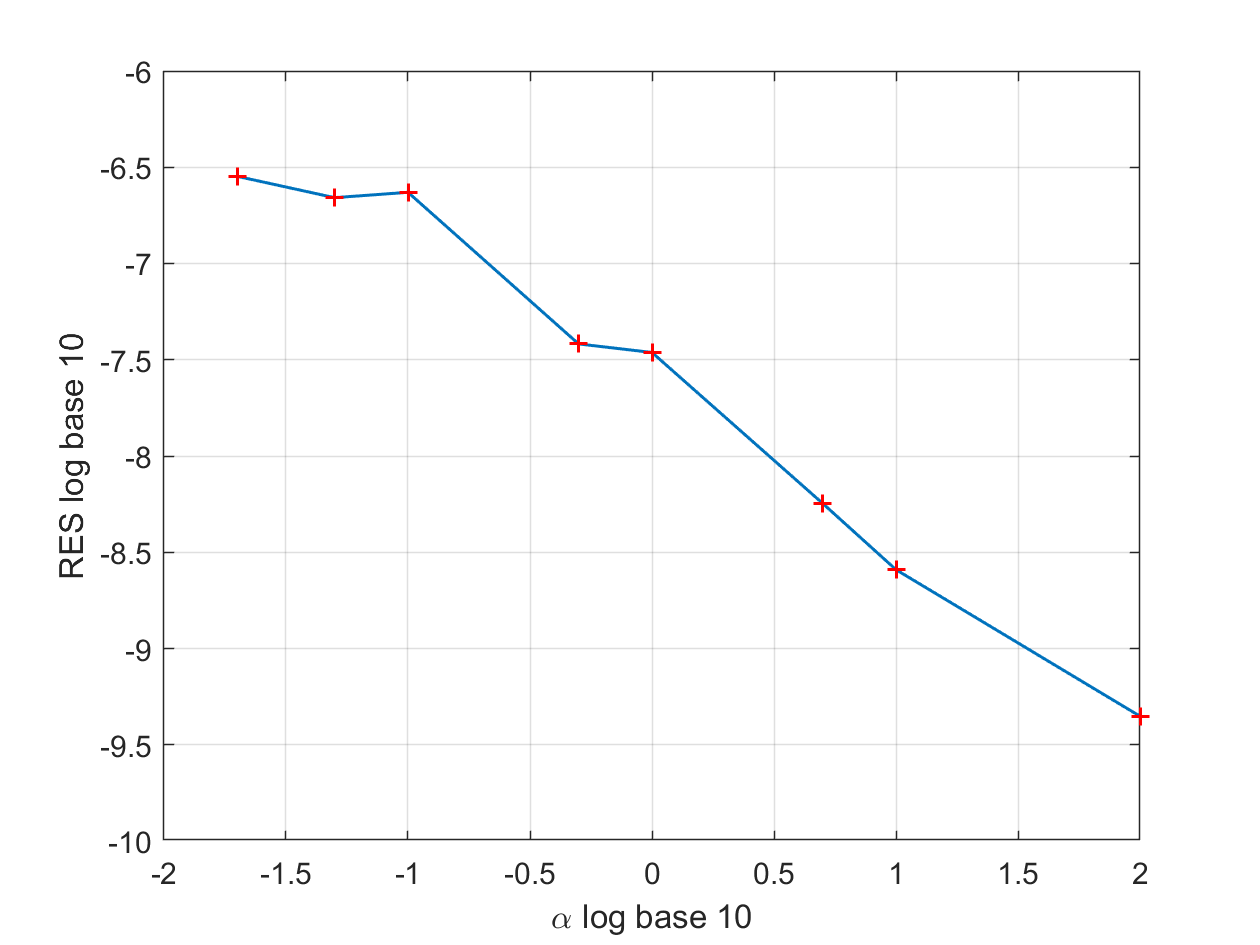} 
        \caption*{(a) $r = 0.01$} 
    \end{minipage}
    \hfill 
    \begin{minipage}[t]{0.3\textwidth}
        \centering
        \includegraphics[width=\textwidth]{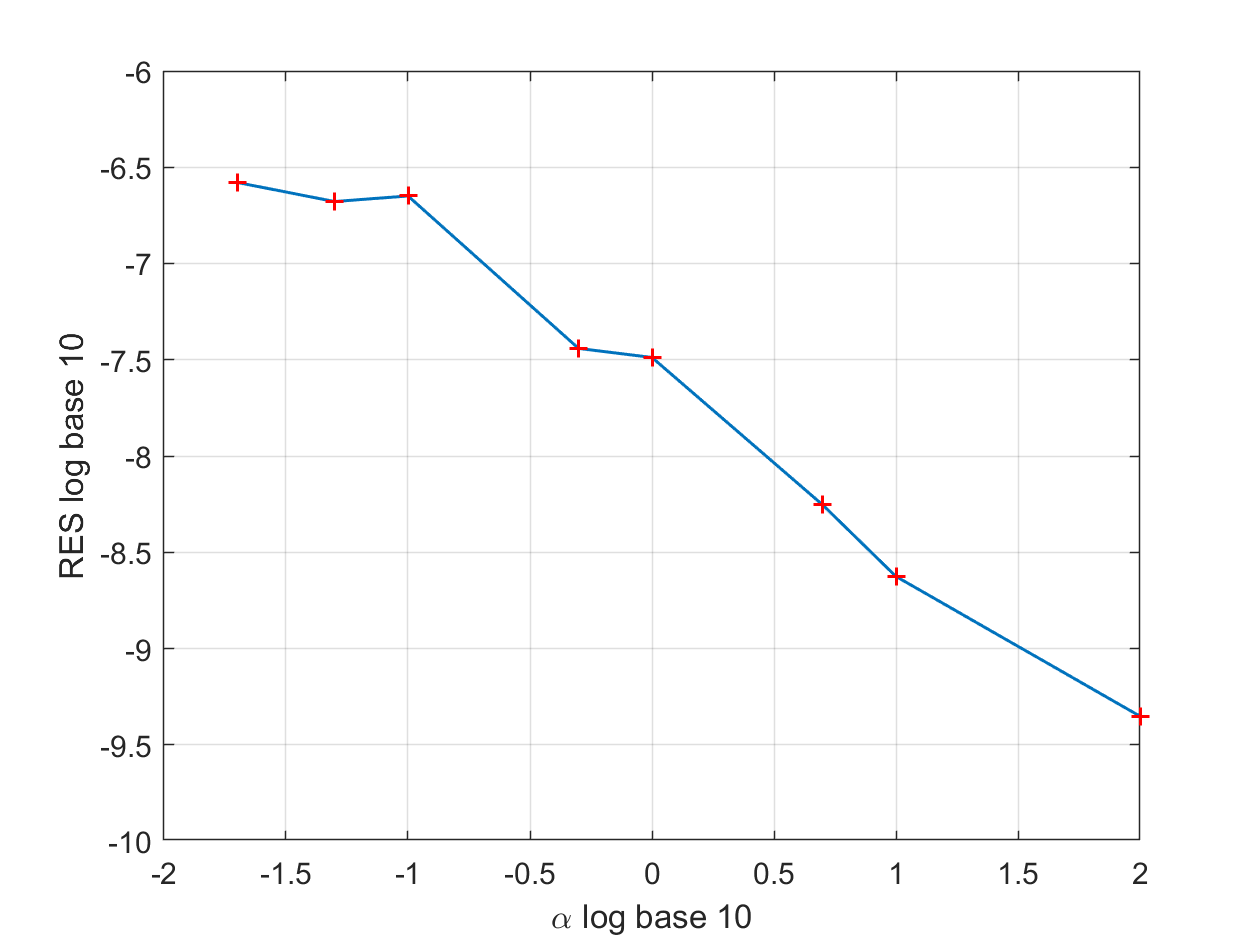} 
        \caption*{(b) $r = 0.1$} 
    \end{minipage}
    \hfill 
    \begin{minipage}[t]{0.3\textwidth}
        \centering
        \includegraphics[width=\textwidth]{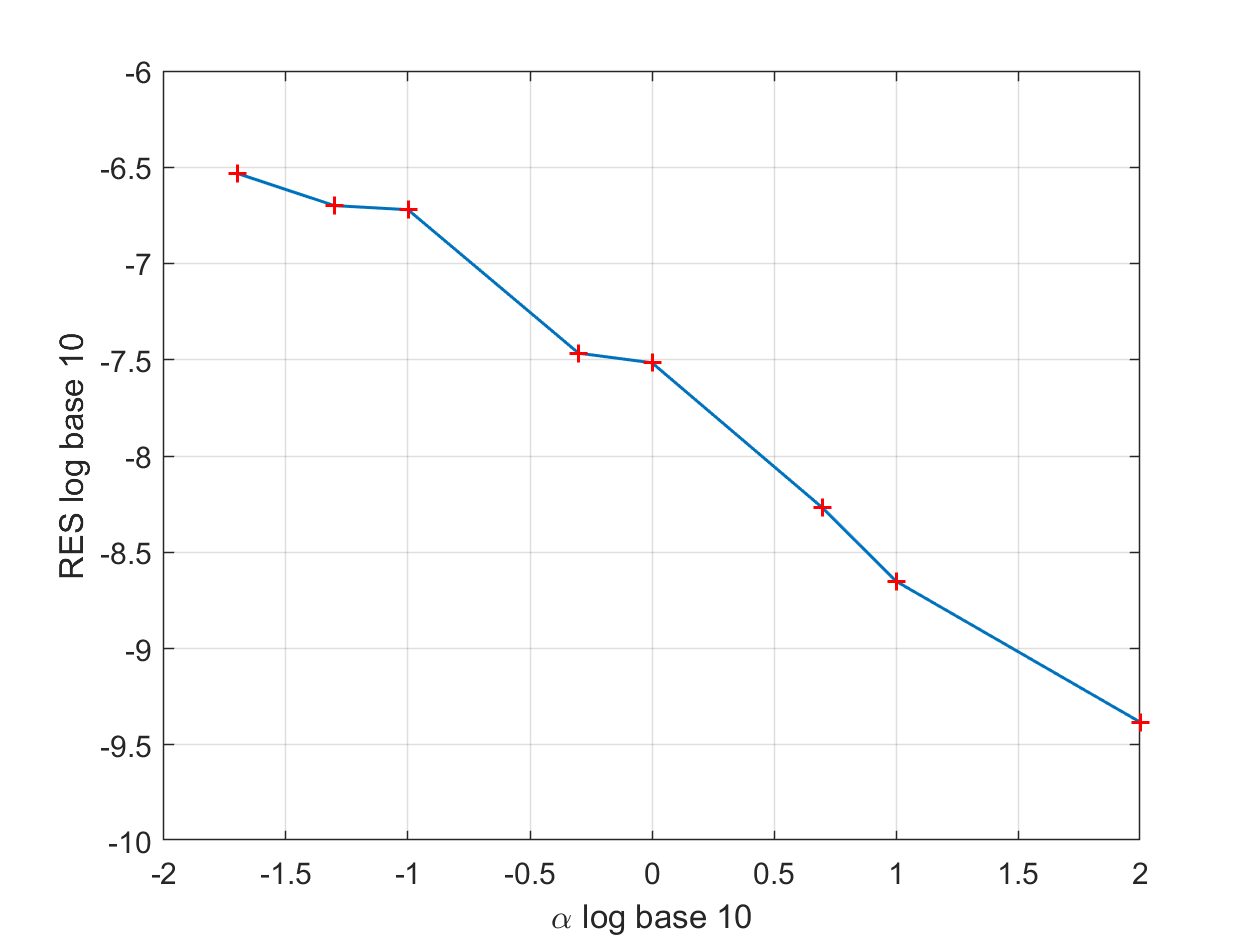} 
        \caption*{(c) $r = 1$} 
    \end{minipage}
    \caption{\textit{Impact of $\alpha$ on convergence residual of Sylvester equation with different $r$.} 
    \label{fig:sylvester-alpha-impact}} 
\end{figure}

\section{Conclusion and Future work}
\label{sec:con}
In this paper, we have presented a novel 
mixed-precision iterative refinement algorithm, 
GADI-IR, which effectively 
combines multiple precision arithmetic to 
solve large-scale sparse linear systems. 
Our key contributions and findings can be summarized as follows:

First, we developed a theoretical framework 
for analyzing the convergence of mixed-precision GADI-IR, 
establishing the relationship between precision levels and 
convergence conditions. Through careful backward error analysis, 
we demonstrated how the regularization parameter $\alpha$ can 
be used to ensure convergence when using reduced precision arithmetic.

Second, we successfully integrated low-precision computations 
into the parameter prediction process using Gaussian Process Regression (GPR), achieving 
significant speedup without compromising accuracy. Our experimental 
results showed that using FP32 for the training set generation 
reduces the computational time by approximately 50\% while maintaining 
prediction quality comparable to FP64.

Third, comprehensive numerical experiments on various test 
problems, including three-dimensional convection-diffusion 
equations and Sylvester equations, validated both our theoretical 
analysis and the practical effectiveness of the mixed-precision 
approach. The results demonstrated that GADI-IR can achieve 
substantial acceleration while maintaining solution accuracy 
through appropriate precision mixing strategies.

Looking ahead, we have several promising directions for future research like 
investigation of newer precision formats (e.g., FP8) that 
could potentially further enhance the algorithm's performance and efficiency and
implementation of GADI-IR on modern supercomputing platforms.

These future developments will further strengthen the practical 
applicability and efficiency of the GADI-IR algorithm in 
solving large-scale linear systems.



\bibliographystyle{siamplain}
\bibliography{references}

\begin{thebibliography}{10}

\bibitem{8948697}
{\sc A.~Abdelfattah, S.~Tomov, and J.~Dongarra}, {\em Towards half-precision
  computation for complex matrices: A case study for mixed precision solvers on
  gpus}, in 2019 IEEE/ACM 10th Workshop on Latest Advances in Scalable
  Algorithms for Large-Scale Systems (ScalA), 2019, pp.~17--24,
  \url{https://doi.org/10.1109/ScalA49573.2019.00008}.

\bibitem{doi:10.1137/23M1549079}
{\sc P.~Amestoy, A.~Buttari, N.~J. Higham, J.-Y. L’Excellent, T.~Mary, and
  B.~Vieubl\'{e}}, {\em Five-precision gmres-based iterative refinement}, SIAM
  Journal on Matrix Analysis and Applications, 45 (2024), pp.~529--552,
  \url{https://doi.org/10.1137/23M1549079},
  \url{https://doi.org/10.1137/23M1549079},
  \url{https://arxiv.org/abs/https://doi.org/10.1137/23M1549079}.

\bibitem{articleSuccessive-Overrelaxation}
{\sc Z.-z. Bai, G.~Golub, and K.~Ng}, {\em On successive-overrelaxation
  acceleration of the hermitian and skew-hermitian splitting iterations},
  Numerical Linear Algebra with Applications, 14 (2007),
  \url{https://doi.org/10.1002/nla.517}.

\bibitem{BENNER20091035}
{\sc P.~Benner, R.-C. Li, and N.~Truhar}, {\em On the adi method for sylvester
  equations}, Journal of Computational and Applied Mathematics, 233 (2009),
  pp.~1035--1045,
  \url{https://doi.org/https://doi.org/10.1016/j.cam.2009.08.108},
  \url{https://www.sciencedirect.com/science/article/pii/S0377042709006050}.

\bibitem{doi:10.1137/23m1557209}
{\sc C.~Boutsikas, P.~Drineas, and I.~C.~F. Ipsen}, {\em Small singular values
  can increase in lower precision}, SIAM Journal on Matrix Analysis and
  Applications, 45 (2024), pp.~1518--1540,
  \url{https://doi.org/10.1137/23M1557209},
  \url{https://doi.org/10.1137/23M1557209},
  \url{https://arxiv.org/abs/https://doi.org/10.1137/23M1557209}.

\bibitem{doi:10.1137/17M1140819}
{\sc E.~Carson and N.~J. Higham}, {\em Accelerating the solution of linear
  systems by iterative refinement in three precisions}, SIAM Journal on
  Scientific Computing, 40 (2018), pp.~A817--A847,
  \url{https://doi.org/10.1137/17M1140819},
  \url{https://doi.org/10.1137/17M1140819},
  \url{https://arxiv.org/abs/https://doi.org/10.1137/17M1140819}.

\bibitem{doi:10.1137/20M1316822}
{\sc E.~Carson, N.~J. Higham, and S.~Pranesh}, {\em Three-precision gmres-based
  iterative refinement for least squares problems}, SIAM Journal on Scientific
  Computing, 42 (2020), pp.~A4063--A4083,
  \url{https://doi.org/10.1137/20M1316822},
  \url{https://doi.org/10.1137/20M1316822},
  \url{https://arxiv.org/abs/https://doi.org/10.1137/20M1316822}.

\bibitem{8766229}
{\sc IEEE}, {\em Ieee standard for floating-point arithmetic}, IEEE Std
  754-2019 (Revision of IEEE 754-2008),  (2019), pp.~1--84,
  \url{https://doi.org/10.1109/IEEESTD.2019.8766229}.

\bibitem{JIANG2025642}
{\sc K.~Jiang, S.~Li, and J.~Zhang}, {\em A general alternating-direction
  implicit newton method for solving continuous-time algebraic riccati
  equation}, Applied Numerical Mathematics, 207 (2025), pp.~642--656,
  \url{https://doi.org/https://doi.org/10.1016/j.apnum.2024.09.029},
  \url{https://www.sciencedirect.com/science/article/pii/S0168927424002666}.

\bibitem{doi:10.1137/21M1450197}
{\sc K.~Jiang, X.~Su, and J.~Zhang}, {\em A general alternating-direction
  implicit framework with gaussian process regression parameter prediction for
  large sparse linear systems}, SIAM Journal on Scientific Computing, 44
  (2022), pp.~A1960--A1988, \url{https://doi.org/10.1137/21M1450197},
  \url{https://doi.org/10.1137/21M1450197},
  \url{https://arxiv.org/abs/https://doi.org/10.1137/21M1450197}.

\bibitem{10.1145/321386.321394}
{\sc C.~B. Moler}, {\em Iterative refinement in floating point}, J. ACM, 14
  (1967), p.~316–321, \url{https://doi.org/10.1145/321386.321394},
  \url{https://doi.org/10.1145/321386.321394}.

\bibitem{nvidia_hopper_2024}
{\sc NVIDIA}, {\em Hopper architecture whitepaper}, 2022,
  \url{https://resources.nvidia.com/en-us-tensor-core/gtc22-whitepaper-hopper}.
\newblock Accessed: 2024-12-10.

\bibitem{nvidia_tensor_core_gpu_2024}
{\sc NVIDIA}, {\em Nvidia tensor core gpu datasheet}, 2022,
  \url{https://resources.nvidia.com/en-us-tensor-core/nvidia-tensor-core-gpu-datasheet}.
\newblock Accessed: 2024-12-10.

\bibitem{10.5555/500666}
{\sc G.~W. Stewart}, {\em Matrix algorithms}, Society for Industrial and
  Applied Mathematics, USA, 2001.

\bibitem{Wilkinson2023}
{\sc J.~H. Wilkinson and J.~H. et~al.}, {\em Rounding errors in algebraic
  processes}, Society for Industrial and Applied Mathematics, Philadelphia,
  Pennsylvania, 2023.

\bibitem{NIPS1995_7cce53cf}
{\sc C.~Williams and C.~Rasmussen}, {\em Gaussian processes for regression}, in
  Advances in Neural Information Processing Systems, D.~Touretzky, M.~Mozer,
  and M.~Hasselmo, eds., vol.~8, MIT Press, 1995,
  \url{https://proceedings.neurips.cc/paper_files/paper/1995/file/7cce53cf90577442771720a370c3c723-Paper.pdf}.

\end{thebibliography}

\end{document}